\theoremstyle{plain}
\newtheorem{theorem}{Theorem}
\newtheorem*{theorem*}{Theorem}
\newtheorem{lemma}{Lemma}
\newtheorem*{proposition*}{Proposition}
\newtheorem*{corollary*}{Corollary}
\theoremstyle{definition}
\theoremstyle{remark}
\newtheorem*{remark*}{Remark}
\begin{document}

\title{On planar bipartite biregular degree sequences}

\author{Patrick Adams}
\author{Yuri Nikolayevsky}

%\thanks{The first author was supported by the AMSI Vacation Research Scholarship Program.} %The AMSI Vacation Research Scholarship Program is jointly funded by the Department of Education and Training and the Australian Mathematical Sciences Institute

\address{Department of Mathematics and Statistics, La Trobe University, Melbourne, Australia 3086.}
\email{18490186@students.latrobe.edu.au}
\email{Y.Nikolayevsky@latrobe.edu.au}

\subjclass[2010]{Primary: 05C10, 05C62; Secondary: 68R10}
% 05C10  (1973-now) Planar graphs; geometric and topological aspects of graph theory
% 05C62  (2000-now) Graph representations (geometric and intersection representations, etc.)
% 68R10  (1980-now) Graph theory (including graph drawing)

\keywords{degree sequence, bipartite biregular graph} % anything else?

%\date{\today}

\begin{abstract}
A pair of sequences of natural numbers is called planar if there exists a simple, bipartite, planar graph for which the given sequences are the degree sequences of its parts. For a pair to be planar, the sums of the sequences have to be equal and Euler's inequality must be satisfied. Pairs that verify these two necessary conditions are called Eulerian. We prove that a pair of constant sequences is planar if and only if it is Eulerian (such pairs can be easily listed) and is different from $(3^5 \, | \, 3^5)$ and $(3^{25} \, | \, 5^{15})$.
\end{abstract}

\maketitle

\section{Introduction}
\label{s:intro}

The theory of graphic sequences dates back to the classical result of P.\,Erd{\H{o}}s and T.\,Gallai from 1960 \cite{EG}, which gives a necessary and sufficient condition for a sequence to be graphic. Recall that a sequence of natural numbers is called \emph{graphic} if there exists a simple graph (no loops, no multiple edges), for which the given sequence is the sequence of degrees of its vertices. In the bipartite case, one speaks of a pair of sequences of natural numbers. Such a pair is called \emph{graphic} if there exists a simple bipartite graph the degree sequences of whose parts are the given two sequences. In 1957, D.\,Gale and H.\,Ryser independently found necessary and sufficient conditions for a pair of sequences to be graphic \cite{Gale,Ryser}.
At present, there are many different proofs and equivalent formulations of the graphicality condition. For most recent results in the bipartite case, we refer the reader to \cite{BR,CMN,MY} and the bibliographies therein. % ref Stacey's thesis? , both for sequences and for pairs of sequences

The picture is much less clear for \emph{planar} graphic sequences, that is, for those which can be realised by a planar simple graph. As the authors say in \cite{SH}, ``The complete characterization of planar graphical degrees sequences is an extremely difficult problem". The classification of planar graphic sequences is known for regular (constant) sequences \cite{HHRT,Owens}, for ``almost regular" sequences, the ones whose maximal and minimal terms differ by no more than $2$ and for some other classes. In the regular case, the characterisation is as follows.

Call a sequence $D=(d_1, d_2, \dots, d_n), \; n \ge 1, d_1 \ge d_2 \ge \dots \ge d_n$, \emph{Eulerian} if the following three conditions are satisfied: the sum $S=\sum_{i=1}^n d_i$ is even, $d_1 < n$, and $m \le 3n-6$ for $n \ge 3$, where $m=\frac12 S$ (Euler's inequality). These conditions are clearly necessary for the sequence to be planar graphic. For $p \ge 1$ and $a \ge 1$ we denote $(a^p)$ the regular sequence consisting of $p$ repeats of $a$.

\begin{theorem}[\cite{HHRT,Owens}] \label{t:reg}
  Let $D=(a^p)$ be a regular sequence.
  \begin{enumerate}[{\rm (1)}]
    \item \label{it:regE}
     The sequence $D$ is Eulerian if and only if $D=(1^{2r}), \, r \ge 1$, or $D=(2^p), \, p \ge 3$, or $D=(3^{2r}), \, r \ge 2$, or $D=(4^p), \, p \ge 6$, or $D=(5^{2r}), \, r \ge 3$.

    \item \label{it:regpg}
    The sequence $D$ is planar graphic if and only if it is Eulerian and is different from $(4^7)$ and $(5^{14})$.
  \end{enumerate}
\end{theorem}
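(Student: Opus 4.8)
\emph{Plan.} The statement has an arithmetic half and a graph-theoretic half. Part~(1) is settled by directly solving the three conditions defining an Eulerian constant sequence $D=(a^p)$: parity ($ap$ even), the inequality $d_1<n$ (that is, $a<p$), and Euler's inequality $\frac12 ap\le 3p-6$ for $p\ge 3$. The last rearranges to $(6-a)p\ge 12$, which forces $a\le 5$ and then, for each $a\in\{1,2,3,4,5\}$, gives an explicit lower bound on $p$; inspecting the finitely many cases with $p\le 2$ by hand produces the list in part~(1). So the real content is part~(2).

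\emph{Necessity and the first exception.} Since a simple planar graph on $n$ vertices satisfies $\Delta<n$ and Euler's inequality and has even degree sum, every planar graphic $(a^p)$ is Eulerian; it remains to exclude $(4^7)$ and $(5^{14})$ and to realise all other Eulerian $(a^p)$. For $(4^7)$ I would pass to complements: a $4$-regular simple graph on $7$ vertices has a $2$-regular complement, so that complement is $C_7$ or $C_3\sqcup C_4$. Now $\overline{C_3\sqcup C_4}$ contains $K_{3,4}\supseteq K_{3,3}$, and $\overline{C_7}$ contains a subdivision of $K_{3,3}$: on the $7$-cycle $0\,1\,\dots\,6$ take branch classes $\{1,2,3\}$ and $\{4,5,6\}$; all nine cross pairs except $\{3,4\}$ are edges of $\overline{C_7}$, and $3\,0\,4$ is a path of length two. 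By Kuratowski's theorem both complements are non-planar, so $(4^7)$ is not planar graphic.

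\emph{The case $(5^{14})$.} This is the crux. Suppose $G$ is a simple $5$-regular planar graph on $14$ vertices; it has $35$ edges, and $G$ has no cut vertex (were $v$ a cut vertex, each of the $\ge 2$ components of $G-v$ would, with $v$ adjoined, be a planar graph in which all but one vertex has degree $5$, forcing each such component to have at least $7$ vertices, hence at least $14$ vertices outside $v$ — impossible, as there are only $13$). Euler's formula then gives $23$ faces whose lengths sum to $70$, and since every face has length at least $3$, exactly one face is a quadrilateral and the other $22$ are triangles. Writing the quadrilateral as $v_1v_2v_3v_4$, at most one of $v_1v_3,v_2v_4$ is an edge of $G$, since two chords joining opposite corners of a facial $4$-cycle would have to cross; adding the other diagonal yields a planar triangulation $T$ on $14$ vertices with two adjacent vertices of degree $6$ and the remaining twelve of degree $5$. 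The final — and hardest — step is to show no such triangulation exists, which I would do by analysing the links of the two degree-$6$ vertices together with their common and adjacent neighbours and checking that the forced local configuration cannot close up into a triangulation of the sphere on exactly $14$ vertices.

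\emph{Constructions.} For every remaining Eulerian sequence an explicit planar realisation is available: $(1^{2r})$ is a perfect matching on $2r$ vertices; $(2^p)$ is the cycle $C_p$; $(3^4)=K_4$, and for $r\ge 3$ the sequence $(3^{2r})$ is realised by the $r$-prism (two $r$-cycles joined by a perfect matching); for even $p=2s\ge 6$ the sequence $(4^p)$ is realised by the antiprism on $2s$ vertices, and for odd $p\ge 9$ by modifying an antiprism with a standard local gadget that adds one vertex while preserving $4$-regularity and planarity; finally $(5^{12})$ is the icosahedron, and for even $p\ge 16$ the sequence $(5^p)$ is obtained from it by an analogous local modification. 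Apart from fixing these gadgets, the one genuinely delicate point in the whole proof is the non-existence of the planar triangulation associated with $(5^{14})$.
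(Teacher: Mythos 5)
First, a point of reference: the paper does not prove Theorem~\ref{t:reg} at all --- it is quoted from \cite{HHRT,Owens}, and the authors only remark that assertion~\eqref{it:regE} is immediate, that the realisations are explicit constructions, and that the non-planarity of $(4^7)$ and $(5^{14})$ is the technically involved part. So your proposal has to stand on its own merits. Your part~(1) is fine (and, carried out for $a=5$, your inequality $(6-a)p\ge 12$ gives $p\ge 12$, i.e.\ $r\ge 6$, which exposes the bound ``$r\ge 3$'' in the stated theorem as a misprint: $(5^6)=K_6$ violates Euler's inequality). Your $(4^7)$ argument via the $2$-regular complement and Kuratowski is complete and correct, and your reduction of $(5^{14})$ --- no cut vertex, hence $23$ faces with exactly one quadrilateral, add the missing diagonal to obtain a $14$-vertex triangulation with degree sequence $(6^2,5^{12})$ in which the two degree-$6$ vertices are adjacent --- is sound.

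The genuine gap is that the step you yourself call ``the final and hardest'' is never carried out: you do not prove that no such triangulation exists. That non-existence statement (dually: no $24$-vertex fullerene has two adjacent hexagons) is the entire content of the hard half of the theorem, and ``analysing the links of the two degree-$6$ vertices \dots\ and checking that the forced local configuration cannot close up'' is a plan, not an argument; compare the full bad-triangle case analysis the paper performs for its own exceptional pair $(3^{25}\,|\,5^{15})$. A second, smaller gap sits in the constructions. The ``standard local gadget'' for odd $(4^p)$ can be made precise (delete two disjoint edges on a common non-triangular face of the antiprism and join a new vertex to their four endpoints), but the ``analogous local modification'' claimed to produce $(5^p)$ for all even $p\ge 16$ from the icosahedron cannot be a uniform two-vertex expansion: any such induction starting at $p=12$ would also manufacture the non-realisable $(5^{14})$. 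You need either an expansion adding four vertices together with an extra base case at $p=18$, or explicit base drawings, and none is exhibited.
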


We note that assertion~\eqref{it:regE} is immediate, and then to prove Theorem~\ref{t:reg} one constructs the plane drawings explicitly. The most technically involved part is the proof that the two Eulerian sequences excluded in assertion~\eqref{it:regpg} are indeed not planar graphic.

\smallskip

In this paper, we prove the bipartite counterpart of Theorem~\ref{t:reg}. Following a similar approach, we will define the class of Eulerian sequences which satisfy the ``minimal" necessary conditions, and will then see that these conditions are almost sufficient, with the exception of two cases.

For $1 \le a \le b$ and $p, q \ge 1$, we denote $(a^p \, | \, b^q)$ the pair of sequences in which the first sequence consists of $p$ repeats of number $a$, and the second one, of $q$ repeats of number $b$. We call such a pair a \emph{bipartite biregular pair}. We say that a bipartite biregular pair is \emph{planar graphic} (or simply \emph{planar}), if there exists a simple, bipartite, planar graph for which one part consists of $p$ vertices of degree $a$, while the other part has $q$ vertices of degree $b$. A bipartite biregular pair is said to be \emph{Eulerian}, if $ap=bq$, $q \ge a$, and Euler's inequality $m \le 2(p+q)-4$ is satisfied, where $m=ap=bq$ and $p+q \ge 3$. These conditions are necessary for the pair to be planar graphic. We prove the following.

\begin{theorem} \label{t:bb}
  Let $P=(a^p \, | \, b^q), \; 1 \le a \le b, \; p, q \ge 1$ be a bipartite biregular pair.
  \begin{enumerate}[{\rm (1)}]
    \item \label{it:bbE}
     The pair $P$ is Eulerian if and only if $P=(1^{bq} \, | \, b^q)$, or $P=(2^{br} \, | \, b^{2r}), \, r \ge 1$, or $P=(2^{qr} \, | \, (2r)^q), \, q \ge 3$ is odd, or $P = (3^p \, | \, 3^p), \, p \ge 4$, or $P=(3^{4r} \, | \, 4^{3r}), \, r \ge 2$, or $P = (3^{5r} \, | \, 5^{3r}), \, r \ge 4$.

    \item \label{it:bbpg}
    The pair $P$ is planar graphic if and only if it is Eulerian and is different from $(3^5 \, | \, 3^5)$ and $(3^{25} \, | \, 5^{15})$.
  \end{enumerate}
\end{theorem}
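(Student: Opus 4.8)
The proof splits along the two assertions of Theorem~\ref{t:bb}.

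Assertion~\eqref{it:bbE} is a finite computation. If $P$ is Eulerian then $m=ap=bq$, so $p=m/a$ and $q=m/b$, and Euler's inequality $m\le 2(p+q)-4$ rearranges to $m\,(2/a+2/b-1)\ge 4$; since $m>0$ this forces $1/a+1/b>1/2$, hence $a\in\{1,2,3\}$, and $b\in\{3,4,5\}$ when $a=3$. For each of these finitely many pairs $(a,b)$ one solves $ap=bq$ in positive integers, imposes $q\ge a$, and re-imposes Euler's inequality; reading off the solutions gives exactly the six families listed. I would present this step as a short table.

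For the ``if'' part of assertion~\eqref{it:bbpg} I would exhibit a plane drawing of every Eulerian pair other than $(3^5\,|\,3^5)$ and $(3^{25}\,|\,5^{15})$. The pairs $(1^{bq}\,|\,b^q)$ and $(2^{br}\,|\,b^{2r})$ are realized by $q$ disjoint stars $K_{1,b}$ and by $r$ disjoint copies of $K_{2,b}$; the pair $(2^{qr}\,|\,(2r)^q)$ is realized by subdividing every edge of the $q$-cycle taken with edge-multiplicity $r$ (a loopless planar multigraph, so its subdivision is a simple planar bipartite graph). For the families in which one side is $3$-regular I would combine small \emph{base graphs} with local \emph{expansion operations}, each performed inside a disc so as to preserve planarity, bipartiteness and the two degree values. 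For $(3^p\,|\,3^p)$: take $C_p\times K_2$ for even $p\ge 4$, and for odd $p\ge 7$ apply to $C_{p-3}\times K_2$ the operation that deletes a vertex of degree $3$ and replaces it by a $6$-cycle together with its centre, three spokes from the centre to every other vertex of the $6$-cycle, and three legs joining the remaining three vertices of the $6$-cycle to the former neighbours of the deleted vertex; this adds $3$ vertices to each part, so all $p\ge 4$ with $p\ne 5$ are obtained. For $(3^{4r}\,|\,4^{3r})$ and $(3^{5r}\,|\,5^{3r})$ the base graphs are the radial (vertex--face incidence) graphs of the cube and of the dodecahedron, which realize $r=2$ and $r=4$, and two analogous expansion operations of suitable sizes then realize all $r\ge 2$, respectively all $r\ge 4$ with $r\ne 5$. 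A short check confirms that no Eulerian pair is missed.

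The remaining and hardest point is the ``only if'' part of assertion~\eqref{it:bbpg}: that $(3^5\,|\,3^5)$ and $(3^{25}\,|\,5^{15})$ are \emph{not} planar graphic. These are precisely the Eulerian pairs for which Euler's inequality holds with deficit $1$, i.e.\ $m=2(p+q)-5$. Let $G$ be a hypothetical realization. First I would show that $G$ is $2$-connected: if $G$ were disconnected, or had a bridge or a cut vertex, it would split into pieces, and equating the two side-sums of edges within each piece, together with Euler's inequality applied to each piece, leads to a divisibility obstruction or to a piece too dense to be planar; a $2$-vertex cut is excluded the same way. Consequently every face of $G$ has even length at least $4$, and since the $(p+q)-3$ face lengths sum to $2m=2\bigl(2(p+q)-5\bigr)$, the face vector is forced to be: all quadrilaterals together with exactly one hexagon. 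Now fix the $B$-side and let $H$ be the graph on the $q$ vertices of $B$ obtained by joining two vertices whenever they are the two opposite $B$-vertices of a quadrilateral face of $G$, and in addition joining pairwise the three $B$-vertices of the hexagonal face; one checks, by deleting from the drawing all $A$-vertices and all edges of $G$, that $H$ is a planar triangulation on $q$ vertices in which the three $B$-vertices of the hexagon form a facial triangle. For $(3^5\,|\,3^5)$ we get $q=5$, so $H$ would be the unique triangulation on five vertices, $K_5$ minus an edge, whose three vertices of degree $4$ do not bound a face, a contradiction. For $(3^{25}\,|\,5^{15})$ we get $q=15$, so $H$ would be a planar triangulation on $15$ vertices with degree sequence $(6,6,6,5,\dots,5)$ whose three degree-$6$ vertices bound a face. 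Excluding this is the technical heart of the proof, and the step I expect to be the main obstacle; I would carry it out by a discharging / local case analysis around the three degree-$6$ vertices, in the spirit of the exclusion of $(4^7)$ and $(5^{14})$ in Theorem~\ref{t:reg}, showing that the forced local configurations cannot be completed to such a triangulation. Together with assertion~\eqref{it:bbE} and the constructions above, this proves assertion~\eqref{it:bbpg}.
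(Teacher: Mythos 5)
Your architecture matches the paper's almost exactly: the same finite computation for assertion~\eqref{it:bbE}, explicit constructions for the planar families, and for the two exceptional pairs the same reduction --- one hexagonal face plus quadrilaterals, then passage to the derived triangulation on the $b$-side whose three exceptional vertices must bound a facial triangle. However, the proposal has genuine gaps at precisely the points where the work lies.

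First, the derived graph $H$ need not a priori be simple: two distinct quadrilateral faces of $G$ can have the same pair of opposite $B$-vertices, so the drawing you obtain may have parallel edges. Until this is excluded you cannot assert that $H$ is ``the unique triangulation on five vertices, $K_5$ minus an edge'', nor set up any case analysis for $(6^3,5^{12})$. The paper devotes Lemma~\ref{l:simple} to exactly this; for $(4^3,3^2)$ it is a short degree count (a doubled pair would force degree sum at least $12$), but for $(6^3,5^{12})$ it requires a careful analysis of minimal ``circles'' formed by double edges. Second, and decisively, the exclusion of a simple plane triangulation with degree sequence $(6^3,5^{12})$ whose three degree-$6$ vertices bound a face is the technical heart of the whole theorem, and you explicitly defer it (``the step I expect to be the main obstacle''). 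The paper carries it out not by discharging but by forced propagation from the facial $(6,6,6)$-triangle: a lemma classifying ``bad'' boundary triangles, a layer-by-layer determination of the next ring of vertices, and a final counting contradiction inside a nonagonal region containing the last three vertices. Your plan names a plausible method but contains no proof of this step, so the argument is incomplete where it matters most. A smaller, fixable gap: the expansion operations needed to generate all of $(3^{4r}\,|\,4^{3r})$, $r\ge 2$, and $(3^{5r}\,|\,5^{3r})$, $r\ge 4$, $r\ne 5$, from the radial graphs of the cube and dodecahedron are asserted but not exhibited; note that in the second family no operation can add $1$ to $r$ (it would manufacture the forbidden $r=5$ from $r=4$), so at least two odd values such as $r=7$ and $r=9$ must be realized separately --- which is exactly what the paper does with explicit drawings before closing the family under disjoint unions. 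On the positive side, your finish of the $(3^5\,|\,3^5)$ case via uniqueness of the $5$-vertex triangulation is a little slicker than the paper's partial-drawing inspection, once simplicity of $H$ is in hand.
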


Throughout the paper, a ``drawing" of a graph  always means an embedding. %In the drawings of bipartite graphs, the vertices belonging to the same part are shown in the same colour. % when submit: solid and hollow

\section{Proof of Theorem~\ref{t:bb}}
\label{s:proof}

\eqref{it:bbE} Let $P=(a^p \, | \, b^q), \; 1 \le a \le b, \, 1 \le p,q$. From $ap, bq \le 2 (p+q)-4$ it follows that $a < 4$. Now if $a=1$ or $a=2$, the claim is immediate. If $a=3$, then $q= 3pb^{-1}$ and so $b \le \frac{6p}{p+4} < 6$. This gives $b \in \{3,4,5\}$ and $p \ge \frac{4b}{6-b}$, and then the remaining cases follow.

\eqref{it:bbpg} \underline{Planar cases.} We first show that all the pairs which are claimed to be planar are indeed planar, by explicitly exhibiting plane drawings.

If $a=1$, the pair $P=(1^{bq} \, | \, b^q)$ is always planar: we can take $q$ disjoint copies of $K_{1,b}$. Note that there is only one graph realising this pair and that for $q \ge 2$, it is disconnected. However, in all the cases which follow the plane drawings can be made connected using the following simple procedure.
\begin{figure}[h!]
\tikzstyle{blue} = [circle,draw=blue!50,fill=blue!20,ultra thick,inner sep=0pt,minimum size=3mm]
\tikzstyle{red} = [circle,draw=red!50,fill=red!20,ultra thick,inner sep=0pt,minimum size=3mm]
\begin{tikzpicture}
\node [blue] (v1) at (0,0) {};
\node [red] (v2) at (0,1) {};
\draw (-1,1)--(v2)--(-1,0.8); \draw (-1,1.2)--(v2)--(v1)--(-1,-0.2); \draw (-1,0)--(v1)--(-1,0.2);
\node [red] (v3) at (1,0) {};
\node [blue] (v4) at (1,1) {};
\draw (2,1)--(v4)--(2,0.8); \draw (2,1.2)--(v4)--(v3)--(2,-0.2); \draw (2,0)--(v3)--(2,0.2);
\node [blue] (v5) at (5,0) {};
\node [red] (v6) at (5,1) {};
\node [red] (v7) at (6,0) {};
\node [blue] (v8) at (6,1) {};
\draw (7,1)--(v8)--(7,0.8); \draw (7,1.2)--(v8)--(v6)--(4,1.2); \draw (7,0)--(v7)--(7,0.2);
\draw (4,1)--(v6)--(4,0.8); \draw (7,-0.2)--(v7)--(v5)--(4,-0.2); \draw (4,0)--(v5)--(4,0.2);
\node at (3,0.5) {$\mathbf{\longrightarrow}$};
\end{tikzpicture}
\caption{Reconnecting two edges in the disjoint union of two graphs.}
\label{fig:connect}
\end{figure}
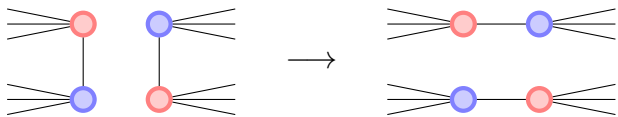

Given the drawings of the disjoint union of two graphs, we place them as on the left in Figure~\ref{fig:connect} and then reconnect two edges as on the right in Figure~\ref{fig:connect}. This results in a plane drawing of a bipartite graph, with the ``correct" degrees of the vertices. If the degrees of all four vertices shown in Figure~\ref{fig:connect} are at least $2$, the resulting graph is connected.

If $a=2$, we consider two cases depending on the parity of $q$. If $q=2r$, we have $P=(2^{br} \, | \, b^{2r})$. This pair is always planar: we can take $r$ copies of $K_{2,b}$ (one can make the resulting drawing connected repeatedly using the procedure as in Figure~\ref{fig:connect}). If $q$ is odd, then $P=(2^{qr} \, | \, (2r)^q), \, q \ge 3$. This pair is also planar. To see that, take $q$ copies of $K_{2,r}$ and in the $i$-th copy, $i=1, \dots, q$, denote $v_i, v_{i+1}$ the two vertices of degree $r$ if $r \ne 2$, (if $r=2$, denote $v_i, v_{i+1}$ any two non-adjacent vertices in the $i$-th copy of $K_{2,2}$). Now identify the points having the same label $v_i$ (where $i$ is computed modulo $q$).

If $a=b=3$ we have $P = (3^p \, | \, 3^p), \, p \ge 4$. For any even $p \ge 4$, the pair $P$ is planar: a planar realisation is the $p$-prism. Replacing a vertex by seven vertices as in Figure~\ref{fig:expansion3} we obtain that if the pair $(3^{p} \, | \, 3^{p})$ is planar, then the pair $(3^{p+3} \, | \, 3^{p+3})$ is also planar. It follows that for all $p \ge 6$ and for $p=4$, the pair $P = (3^p \, | \, 3^p)$ is planar.
\begin{figure}[h!]
\tikzstyle{blue} = [circle,draw=blue!50,fill=blue!20,ultra thick,inner sep=0pt,minimum size=3mm]
\tikzstyle{red} = [circle,draw=red!50,fill=red!20,ultra thick,inner sep=0pt,minimum size=3mm]
\begin{tikzpicture}
\node [blue] (v2) at (-0.2,0) {};
\node (v3) at (-0.2,1.6) {}; %
\node (v1) at (-1.6,-0.8) {};%%
\node (v4) at (1.2,-0.8) {}; %%
\draw  (v1) edge (v2);
\draw  (v2) edge (v3);
\draw  (v2) edge (v4);
\node (v13) at (2.4,-0.8) {};%%
\node (v14) at (5.2,-0.8) {};%%
\node (v6) at (3.8,1.6) {}; %
\node [blue] (v9) at (3.8,0) {};
\node [red] (v7) at (3.2,0.4) {};
\node [red] (v8) at (4.4,0.4) {};
\node [red] (v11) at (3.8,-0.8) {};
\node [blue] (v10) at (3.2,-0.4) {};
\node [blue] (v5) at (3.8,0.8) {};
\node [blue] (v12) at (4.4,-0.4) {};
\draw  (v5) edge (v6);
\draw  (v7) edge (v5);
\draw  (v5) edge (v8);
\draw  (v9) edge (v8);
\draw  (v9) edge (v7);
\draw  (v7) edge (v10);
\draw  (v10) edge (v11);
\draw  (v11) edge (v12);
\draw  (v10) edge (v13);
\draw  (v9) edge (v11);
\draw  (v8) edge (v12);
\draw  (v12) edge (v14);
\node at (1.6,0) {$\mathbf{\longrightarrow}$};
\end{tikzpicture}
\caption{Adding six vertices: $(3^{p} \, | \, 3^{p}) \to (3^{p+3} \, | \, 3^{p+3})$.}
\label{fig:expansion3}
\end{figure}
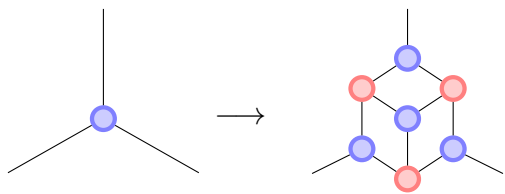

The pair $P=(3^{4r} \, | \, 4^{3r}), \; r \ge 2$ is always planar. To see that, we consider two concentric circles with the center at some point $O$ and the domain $U$ in the intersection of the annulus between the two circles and the central angle of size $\frac{2\pi}{r}$. Inscribe the drawing in Figure~\ref{fig:sector4} in the domain $U$ and then take the union of the images of this drawing under the rotations by $\frac{2\pi}{r} i, \; i=1, \dots, r$, about $O$. We obtain a plane drawing of the pair $P=(3^{4r} \, | \, 4^{3r}), \; r \ge 2$.
\begin{figure}[h!]
\pgfmathsetmacro{\ra}{4}
\pgfmathsetmacro{\rb}{5}
\pgfmathsetmacro{\rc}{6}
\pgfmathsetmacro{\co}{cos(60)}
\pgfmathsetmacro{\si}{sin(60)}
\pgfmathsetmacro{\coa}{cos(75)}
\pgfmathsetmacro{\sia}{sin(75)}
\pgfmathsetmacro{\coc}{cos(97.5)}
\pgfmathsetmacro{\sic}{sin(97.5)}
\tikzstyle{blue} = [circle,draw=blue!50,fill=blue!20,ultra thick,inner sep=0pt,minimum size=3mm]
\tikzstyle{red} = [circle,draw=red!50,fill=red!20,ultra thick,inner sep=0pt,minimum size=3mm]
\begin{tikzpicture}
\draw (\ra*\co,\ra*\si) arc (60:120:\ra);
\draw (\rb*\co,\rb*\si) arc (60:90:\rb); \draw (-\rb*\co,\rb*\si) arc (120:105:\rb);
\draw (\rc*\co,\rc*\si) arc (60:120:\rc);
\node [blue] (v1) at (\rc*\coa,\rc*\sia) {};
\node [blue] (v3) at (\ra*\coa,\ra*\sia) {};
\node [red] (v2) at (\rb*\coa,\rb*\sia) {};
\draw (v1)--(v2)--(v3);
\node [blue] (v4) at (-\rb*\coa,\rb*\sia) {};
\node [blue] (v5) at (0,\rb) {};
\node [red] (v6) at (\rc*\coc,\rc*\sic) {};
\node [red] (v7) at (\ra*\coc,\ra*\sic) {};
\draw (v4)--(v6)--(v5)--(v7)--(v4);
\end{tikzpicture}
\caption{The drawing in the domain $U$ for the pair $P=(3^{4r} \, | \, 4^{3r}), \; r \ge 2$.}
\label{fig:sector4}
\end{figure}
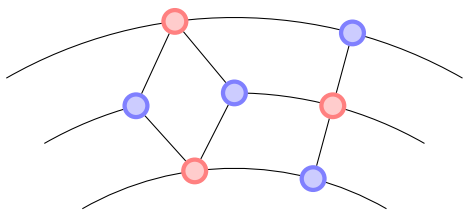
Similarly, for the pair $P=(3^{5r} \, | \, 5^{3r})$, where $r \ge 4$ is even, we start with two concentric circles with the center at $O$ and consider the domain $U$ in the intersection of the annulus between the two circles and the central angle of size $\frac{\pi}{r}$. We then inscribe the drawing in Figure~\ref{fig:sector5} in the domain $U$ and take the union of its images under the rotations by $\frac{\pi}{r} i, \; i=1, \dots, \frac12r$, about $O$. This gives a plane drawing of the pair $P=(3^{5r} \, | \, 5^{3r})$, for any even $r \ge 4$.

\begin{figure}[h!]
\pgfmathsetmacro{\ra}{4}
\pgfmathsetmacro{\rb}{5}
\pgfmathsetmacro{\rc}{6}
\pgfmathsetmacro{\rd}{7}
\pgfmathsetmacro{\re}{8}
\pgfmathsetmacro{\co}{cos(60)}
\pgfmathsetmacro{\si}{sin(60)}
\pgfmathsetmacro{\coaa}{cos(70)}
\pgfmathsetmacro{\siaa}{sin(70)}
\pgfmathsetmacro{\coa}{cos(75)}
\pgfmathsetmacro{\sia}{sin(75)}
\pgfmathsetmacro{\coab}{cos(80)}
\pgfmathsetmacro{\siab}{sin(80)}
\pgfmathsetmacro{\coc}{cos(85)}
\pgfmathsetmacro{\sic}{sin(85)}
\tikzstyle{blue} = [circle,draw=blue!50,fill=blue!20,ultra thick,inner sep=0pt,minimum size=3mm]
\tikzstyle{red} = [circle,draw=red!50,fill=red!20,ultra thick,inner sep=0pt,minimum size=3mm]
\begin{tikzpicture}
\draw (\ra*\co,\ra*\si) arc (60:120:\ra);
\draw (\rb*\co,\rb*\si) arc (60:70:\rb); \draw (0,\rb) arc (90:120:\rb);
\draw (\rc*\co,\rc*\si) arc (60:75:\rc); \draw (\rc*\coc,\rc*\sic) arc (85:100:\rc); \draw (-\rc*\coaa,\rc*\siaa) arc (110:120:\rc);
\draw (\rd*\co,\rd*\si) arc (60:70:\rd); \draw (0,\rd) arc (90:120:\rd);
\draw (\re*\co,\re*\si) arc (60:120:\re);
\node [blue] (v1) at (-\ra*\coa,\ra*\sia) {};
\node [red]  (v2) at (-\rb*\coa,\rb*\sia) {};
\node [blue] (v3) at (-\rc*\coaa,\rc*\siaa) {};
\node [blue] (v4) at (-\rc*\coab,\rc*\siab) {};
\node [red] (v5) at (-\rd*\coa,\rd*\sia) {};
\node [blue] (v6) at (-\re*\coa,\re*\sia) {};
\draw (v1)--(v2)--(v3)--(v5)--(v6); \draw (v2)--(v4)--(v5);
\node [blue] (v7) at (0,\rb) {};
\node [blue] (v8) at (0,\rd) {};
\node [blue] (v9) at (\rb*\coaa,\rb*\siaa) {};
\node [blue] (v10) at (\rb*\coab,\rb*\siab) {};
\node [blue] (v11) at (\rd*\coaa,\rd*\siaa) {};
\node [blue] (v12) at (\rd*\coab,\rd*\siab) {};
\node [red] (v13) at (\ra*\coab,\ra*\siab) {};
\node [red] (v14) at (\re*\coab,\re*\siab) {};
\node [red] (v15) at (\rc*\coc,\rc*\sic) {};
\node [red] (v16) at (\rc*\coa,\rc*\sia) {};
\draw (v14)--(v11)--(v16)--(v9)--(v13)--(v7)--(v15)--(v8)--(v14)--(v12)--(v16)--(v10)--(v15)--(v12); \draw (v10)--(v13);
\end{tikzpicture}
\caption{The drawing in the domain $U$ for the pair $P=(3^{5r} \, | \, 5^{3r}), \; r \ge 4$ even.}
\label{fig:sector5}
\end{figure}
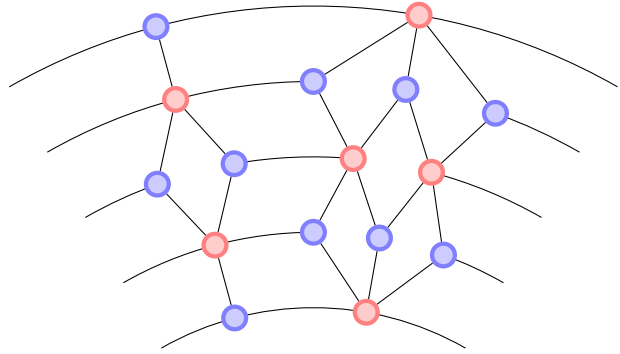

Plane drawings of the pairs $P=(3^{5r} \, | \, 5^{3r})$ for $r = 7$ and $r=9$ are given in Figures~\ref{a3b5r7} and \ref{fig:345527} respectively. Now for any odd $r \ge 11$, a plane drawing of the pair $P=(3^{5r} \, | \, 5^{3r})$ can be obtained by taking the disjoint union of the drawings for $r=7$ and for the corresponding even $r \ge 4$ (the resulting graph can be then made connected by the procedure shown in Figure~\ref{fig:connect}) .
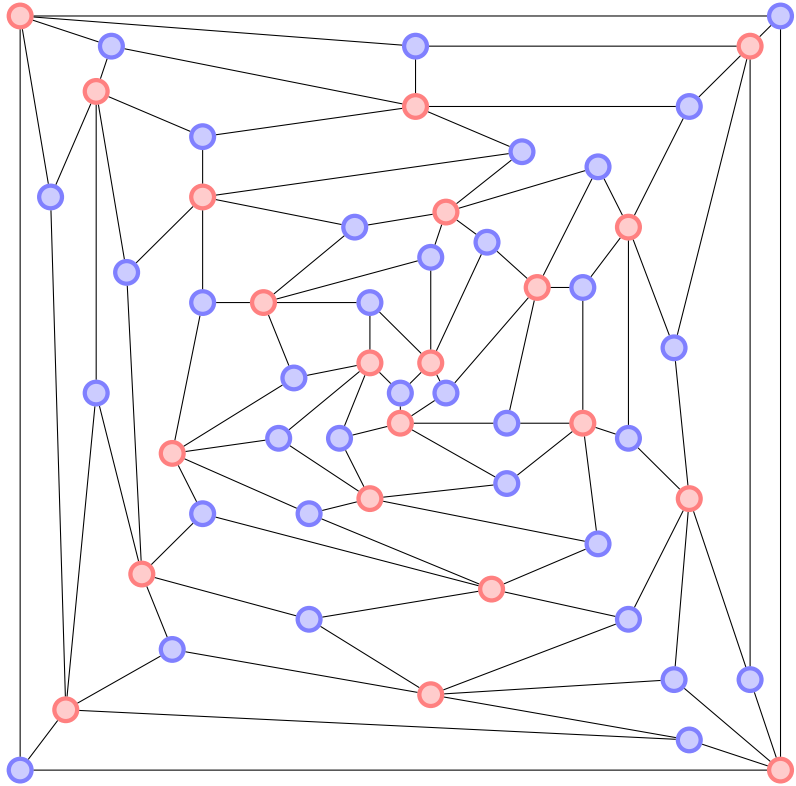
\begin{figure}[h!]
\tikzstyle{blue} = [circle,draw=blue!50,fill=blue!20,ultra thick,inner sep=0pt,minimum size=3mm]
\tikzstyle{red} = [circle,draw=red!50,fill=red!20,ultra thick,inner sep=0pt,minimum size=3mm]
\begin{tikzpicture}[scale=0.4]
\node [blue] (v7) at (-2.5,-5) {};
\node [blue] (v1) at (-3.5,-2) {};
\node [blue] (v3) at (-1,-5) {};
\node [blue] (v5) at (-4.5,-6.5) {};
\node [red] (v2) at (-1.5,-4) {};
\node [red] (v6) at (-3.5,-4) {};
\node [red] (v4) at (-2.5,-6) {};
\draw  (v1) edge (v2);
\draw  (v2) edge (v3);
\draw  (v3) edge (v4);
\draw  (v4) edge (v5);
\draw  (v5) edge (v6);
\draw  (v6) edge (v1);
\draw  (v2) edge (v7);
\draw  (v7) edge (v6);
\draw  (v4) edge (v7);

\node [blue] (v8) at (-1.5,-0.5) {};
\node [blue] (v9) at (0.35,0) {};

\node [blue] (v12) at (1,-8) {};
\node [blue] (v13) at (1,-6) {};
\draw  (v8) edge (v2);
\draw  (v2) edge (v9);

\draw  (v4) edge (v12);
\draw  (v4) edge (v13);
\node [red] (v14) at (-7,-2) {};
\node [red] (v15) at (2,-1.5) {};
\node [red] (v16) at (-3.5,-8.5) {};
\draw  (v1) edge (v14);
\draw  (v3) edge (v15);
\draw  (v5) edge (v16);

\node [blue] (v21) at (-5.5,-9) {};

\node [blue] (v19) at (4,2.5) {};
\node [blue] (v20) at (3.5,-1.5) {};

\draw  (v15) edge (v19);
\draw  (v15) edge (v20);
\draw  (v16) edge (v21);

\draw  (v8) edge (v14);
\draw  (v9) edge (v15);
\draw  (v15) edge (v13);
\draw  (v12) edge (v16);

\node [red] (v24) at (3.5,-6) {};

\node [red] (v25) at (5,0.5) {};

\draw  (v24) edge (v12);
\draw  (v13) edge (v24);
\draw  (v19) edge (v25);
\draw  (v25) edge (v20);

\node [red] (v27) at (-10,-7) {};
\draw  (v27) edge (v21);

\node [red] (v28) at (-1,1) {};
\draw  (v8) edge (v28);
\draw  (v28) edge (v9);
\draw  (v28) edge (v19);
\draw  (v20) edge (v24);
\node [blue] (v33) at (4,-10) {};
\node [red] (v32) at (0.5,-11.5) {};
\node  [blue] (v29) at (5,-6.5) {};
\node  [blue] (v31) at (5,-12.5) {};
\node [red] (v30) at (7,-8.5) {};
\draw  (v29) edge (v25);
\draw  (v24) edge (v29);
\draw  (v29) edge (v30);
\draw  (v30) edge (v31);
\draw  (v31) edge (v32);
\draw  (v33) edge (v24);
\draw  (v33) edge (v32);
\node [blue] (v11) at (-6.5,-6.5) {};
\node [blue] (v10) at (-6,-4.5) {};
\draw  (v10) edge (v6);
\draw  (v11) edge (v6);
\draw  (v33) edge (v16);
\draw  (v16) edge (v11);
\draw  (v14) edge (v10);
\draw  (v10) edge (v27);
\draw  (v11) edge (v27);
\node [blue] (v17) at (-9,-2) {};
\draw  (v17) edge (v14);
\draw  (v17) edge (v27);
\node [blue] (v18) at (-4,0.5) {};
\draw  (v18) edge (v28);
\draw  (v18) edge (v14);

\node [red] (v22) at (-9,1.5) {};
\node [red] (v26) at (-11,-11) {};
\node [blue] (v23) at (-11.5,-1) {};
\node [blue] (v34) at (-9,-9) {};
\draw  (v22) edge (v17);
\draw  (v23) edge (v22);
\draw  (v23) edge (v26);
\draw  (v26) edge (v34);
\draw  (v34) edge (v27);

\draw  (v22) edge (v18);
\node [blue] (v35) at (1.5,3) {};
\node [blue] (v37) at (7,4.5) {};
\node [red] (v36) at (-2,4.5) {};
\draw  (v35) edge (v28);
\draw  (v35) edge (v36);
\draw  (v36) edge (v37);
\draw  (v37) edge (v25);
\draw  (v35) edge (v22);
\node [blue] (v38) at (-9,3.5) {};
\node [red] (v39) at (-12.5,5) {};
\draw  (v38) edge (v36);
\draw  (v38) edge (v22);
\draw  (v38) edge (v39);
\draw  (v39) edge (v23);
\node [blue] (v40) at (-12.5,-5) {};
\draw  (v40) edge (v26);
\draw  (v40) edge (v39);
\node [blue] (v41) at (-12,6.5) {};
\draw  (v39) edge (v41);
\draw  (v41) edge (v36);
\node [red] (v44) at (-15,7.5) {};
\node [blue] (v42) at (-14,1.5) {};
\node [blue] (v45) at (-2,6.5) {};
\node [red] (v43) at (-13.5,-15.5) {};
\draw  (v42) edge (v43);
\draw  (v43) edge (v40);
\draw  (v42) edge (v39);
\draw  (v44) edge (v42);
\draw  (v44) edge (v45);
\draw  (v45) edge (v36);
\draw  (v44) edge (v41);
\node [blue] (v46) at (-10,-13.5) {};
\draw  (v46) edge (v26);
\draw  (v43) edge (v46);
\node [blue] (v47) at (-5.5,-12.5) {};
\draw  (v47) edge (v26);
\draw  (v21) edge (v32);
\draw  (v34) edge (v32);
\node [red] (v48) at (-1.5,-15) {};
\draw  (v47) edge (v48);
\draw  (v46) edge (v48);

\node [blue] (v49) at (6.5,-14.5) {};
\draw  (v49) edge (v48);
\draw  (v48) edge (v31);
\draw  (v30) edge (v49);
\node [blue] (v51) at (-15,-17.5) {};
\node [blue] (v50) at (7,-16.5) {};
\draw  (v50) edge (v43);
\draw  (v43) edge (v51);
\draw  (v50) edge (v48);
\node [red] (v52) at (10,-17.5) {};
\draw  (v51) edge (v52);
\draw  (v52) edge (v49);
\node [blue] (v53) at (9,-14.5) {};
\draw  (v52) edge (v53);
\draw  (v53) edge (v30);

\draw  (v50) edge (v52);
\node [red] (v54) at (9,6.5) {};
\draw  (v54) edge (v37);

\draw  (v53) edge (v54);
\draw  (v54) edge (v45);
\node [blue] (v55) at (6.5,-3.5) {};
\draw  (v55) edge (v30);
\draw  (v25) edge (v55);
\draw  (v55) edge (v54);
\node [blue] (v56) at (10,7.5) {};
\draw  (v54) edge (v56);
\draw  (v56) edge (v52);
\draw  (v44) edge (v56);
\draw  (v44) edge (v51);
\draw  (v47) edge (v32);
\end{tikzpicture}
\caption{Plane drawing of the pair $(3^{5r} \, | \, 5^{3r}), \; r=7$.}
\label{a3b5r7}
\end{figure}
%
%\newpage
%
\begin{figure}[h!]
\pgfmathsetmacro{\ra}{1}
\pgfmathsetmacro{\co}{\ra*cos(30)}
\pgfmathsetmacro{\si}{\ra*sin(30)}
\tikzstyle{blue} = [circle,draw=blue!50,fill=blue!20,ultra thick,inner sep=0pt,minimum size=3mm]
\tikzstyle{red} = [circle,draw=red!50,fill=red!20,ultra thick,inner sep=0pt,minimum size=3mm]
\begin{tikzpicture}
\node [blue] (v0) at (0,0) {}; %$v_0$
\node [blue] (v1) at (0,1) {}; %$v_1$
\node [blue] (v2) at (-\co,-\si) {}; %$v_2$
\node [blue] (v3) at (\co,-\si) {}; %$v_3$
\node [red] (u1) at (\co,\si) {}; %$u_1$
\node [red] (u2) at (-\co,\si) {}; %$u_2$
\node [red] (u3) at (0,-1) {}; %$u_3$
    \draw (v0)--(u1)--(v1)--(u2)--(v2)--(u3)--(v3)--(u1); \draw (u2)--(v0)--(u3);
\node [blue] (v4) at (2*\co,2*\si) {}; %$v_4$
\node [blue] (v5) at (-2*\co,2*\si) {}; %$v_5$
\node [blue] (v6) at (0,-2) {}; %$v_6$
\node [red] (u4) at (0,2) {}; %$u_4$
\node [red] (u5) at (-2*\co,-2*\si) {}; %$u_5$
\node [red] (u6) at (2*\co,-2*\si) {}; %$u_6$
\node [red] (u7) at (3*\co,\si) {}; %$u_7$
\node [red] (u8) at ($(v4)+(0,1)$) {}; %$u_8$
\node [red] (u9) at ($(v5)+(0,1)$) {}; %$u_9$
\node [red] (u10) at (-3*\co,\si) {}; %{\footnotesize $u_{10}$}
\node [red] (u11) at ($(v2)+(0,-2)$) {}; %{\footnotesize $u_{11}$}
\node [red] (u12) at ($(v3)+(0,-2)$) {}; %{\footnotesize $u_{12}$}
\node [red] (u13) at (4*\co,4*\si) {}; %{\footnotesize $u_{13}$}
\node [blue] (v7) at (3*\co,-\si) {}; %$v_7$
\node [blue] (v8) at ($(u1)+(0,2)$) {}; %$v_8$
\node [blue] (v9) at ($(u2)+(0,2)$) {}; %$v_9$
\node [blue] (v10) at (-3*\co,-\si) {}; %{\footnotesize $v_{10}$}
\node [blue] (v11) at ($(u5)+(0,-1)$) {}; %{\footnotesize $v_{11}$}
\node [blue] (v12) at ($(u6)+(0,-1)$) {}; %{\footnotesize $v_{12}$}
\node [blue] (v13) at ($(v4)+(0,-1)$) {}; %{\footnotesize $v_{13}$}
\node [blue] (v14) at ($(v9)+(0,-1)$) {}; %{\footnotesize $v_{14}$}
\node [blue] (v15) at ($(v2)+(0,-1)$) {}; %{\footnotesize $v_{15}$}
\draw (v2)--(u5)--(v10)--(u10)--(v5)--(u2); \draw (u1)--(v4)--(u8)--(v8)--(u4)--(v1); \draw (u3)--(v6)--(u12)--(v12)--(u6)--(v3);
\draw (v4)--(u7)--(v7)--(u6)--(v13)--(u1); \draw (u7)--(v13);
\draw (u2)--(v14)--(u4)--(v9)--(u9)--(v5); \draw (u9)--(v14);
\draw (v6)--(u11)--(v11)--(u5)--(v15)--(u11); \draw (u3)--(v15);
\node [red] (u14) at ($(v8)+(0,1)$) {}; %{\footnotesize $u_{14}$}
\node [red] (u15) at ($(v9)+(0,1)$) {}; %{\footnotesize $u_{15}$}
\node [red] (u16) at (-4*\co,4*\si) {}; %{\footnotesize $u_{16}$}
\node [red] (u17) at ($(u16)+(0,-3)$) {}; %{\footnotesize $u_{17}$}
\node [red] (u18) at ($(u10)+(0,-3)$) {}; %{\footnotesize $u_{18}$}
\node [red] (u19) at (0,-4) {}; %{\footnotesize $u_{19}$}
\node [red] (u20) at ($(u7)+(0,-3)$) {}; %{\footnotesize $u_{20}$}
\node [red] (u21) at ($(u13)+(0,-3)$) {}; %{\footnotesize $u_{21}$}
\node [blue] (v16) at (0,4) {}; %{\footnotesize $v_{16}$}
\node [blue] (v17) at ($(u10)+(0,2)$) {}; %{\footnotesize $v_{17}$}
\node [blue] (v18) at ($(u10)+(-\co,\si)$) {}; %{\footnotesize $v_{18}$}
\node [blue] (v19) at (-4*\co,-4*\si) {}; %{\footnotesize $v_{19}$}
\node [blue] (v20) at ($(u11)+(0,-1)$) {}; %{\footnotesize $v_{20}$}
\node [blue] (v21) at ($(u12)+(0,-1)$) {}; %{\footnotesize $v_{21}$}
\node [blue] (v22) at (4*\co,-4*\si) {}; %{\footnotesize $v_{22}$}
\node [blue] (v23) at ($(u13)+(0,-1)$) {}; %{\footnotesize $v_{23}$}
\node [blue] (v24) at (3*\co,3*\si) {}; %{\footnotesize $v_{24}$}
\node [blue] (v25) at ($(v24)+(0,1)$) {}; %{\footnotesize $v_{25}$}
\node [blue] (v26) at (0,3) {}; %{\footnotesize $v_{26}$}
\node [blue] (v27) at (-3*\co,3*\si) {}; %{\footnotesize $v_{27}$}
\node [blue] (v28) at (-3*\co,-3*\si) {}; %{\footnotesize $v_{28}$}
\node [blue] (v29) at (0,-3) {}; %{\footnotesize $v_{29}$}
\node [blue] (v30) at (3*\co,-3*\si) {}; %{\footnotesize $v_{30}$}
\draw (v24)--(u7)--(v23)--(u13)--(v25)--(u8)--(v24)--(u13);
\draw (v8)--(u14)--(v16)--(u15)--(v26)--(u4); \draw (u14)--(v26); \draw (u15)--(v9);
\draw (v27)--(u9)--(v17)--(u16)--(v18)--(u10)--(v27)--(u16);
\draw (v10)--(u17)--(v19)--(u18)--(v28)--(u5); \draw (u17)--(v28); \draw (v11)--(u18);
\draw (v29)--(u11)--(v20)--(u19)--(v21)--(u12)--(v29)--(u19);
\draw (v12)--(u20)--(v22)--(u21)--(v30)--(u6); \draw (u20)--(v30); \draw (v7)--(u21);
\draw (v20)--(u18); \draw (u21)--(v23); \draw (u15)--(v17);
\node [blue] (v31) at ($(u8)+(0,1)$) {}; %{\footnotesize $v_{31}$}
\node [blue] (v32) at ($(u17)+(0,1)$) {}; %{\footnotesize $v_{32}$}
\node [blue] (v33) at ($(u6)+(0,-2)$) {}; %{\footnotesize $v_{33}$}
\draw (v23)--(u21); \draw (v20)--(u18); \draw (v17)--(u15);
\draw (u8)--(v31)--(u14); \draw (u10)--(v32)--(u17); \draw (u12)--(v33)--(u20);
\node [red] (u22) at ($(v25)+(0,1)$) {}; %{\footnotesize $u_{22}$}
\node [red] (u23) at ($(v18)+(-\co,-\si)$) {}; %{\footnotesize $u_{23}$}
\node [red] (u24) at ($(v33)+(0,-1)$) {}; %{\footnotesize $u_{24}$}
\draw (v25)--(u22)--(v31); \draw (v18)--(u23)--(v32); \draw (v21)--(u24)--(v33);
\node [blue] (v34) at ($(u21)+(\co,\si)$) {}; %{\footnotesize $v_{34}$}
\node [blue] (v35) at ($(u15)+(-\co,\si)$) {}; %{\footnotesize $v_{35}$}
\node [blue] (v36) at ($(u18)+(0,-1)$) {}; %{\footnotesize $v_{36}$}
\draw (u13)--(v34)--(u21); \draw (u15)--(v35)--(u16); \draw (u18)--(v36)--(u19);
\node [red] (u25) at ($(u21)+(2*\co,-2*\si)$) {}; %{\footnotesize $u_{25}$}
\node [red] (u26) at ($(u15)+(0,2)$) {}; %{\footnotesize $u_{26}$}
\node [red] (u27) at ($(u18)+(-2*\co,-2*\si)$) {}; %{\footnotesize $u_{27}$}
\draw (v22)--(u25)--(v34); \draw (v16)--(u26)--(v35); \draw (v19)--(u27)--(v36);
\node [blue] (v37) at ($(v11)+(0,-2.7)$) {}; %{\footnotesize $v_{37}$}
\node [blue] (v38) at ($(v34)+(0,-2)$) {}; %{\footnotesize $v_{38}$}
\node [blue] (v39) at ($(v7)+(3*\co,3*\si)$) {}; %{\footnotesize $v_{39}$}
\node [blue] (v40) at ($(v16)+(0,1)$) {}; %{\footnotesize $v_{40}$}
\node [blue] (v41) at ($(v9)+(-3*\co,3*\si)$) {}; %{\footnotesize $v_{41}$}
\node [blue] (v42) at ($(u23)+(0,-3)$) {}; %{\footnotesize $v_{42}$}
\draw (u19)--(v37)--(u24)--(v38); \draw (v37)--(u27)--(v42)--(u23); \draw (u20)--(v38)--(u25)--(v39)--(u13); \draw (v39)--(u22)--(v40)--(u14);
\draw (v40)--(u26)--(v41)--(u16); \draw (v41)--(u23)--(v42)--(u17);
\node [blue] (v43) at ($(u25)+(0,-1)$) {}; %{\footnotesize $v_{43}$}
\node [blue] (v44) at ($(u23)+(-\co,-\si)$) {}; %{\footnotesize $v_{44}$}
\draw (u24)--(v43)--(u25); \draw (v43) to [out=60,in=0] (u22);
\draw (v44)--(u23); \draw (v44) to [out=90,in=180] (u26); \draw (v44) to [out=-90,in=120] (u27);
\end{tikzpicture}
\caption{Plane drawing of the pair $(3^{5r} \, | \, 5^{3r}), \, r = 9$.}
\label{fig:345527}
\end{figure}
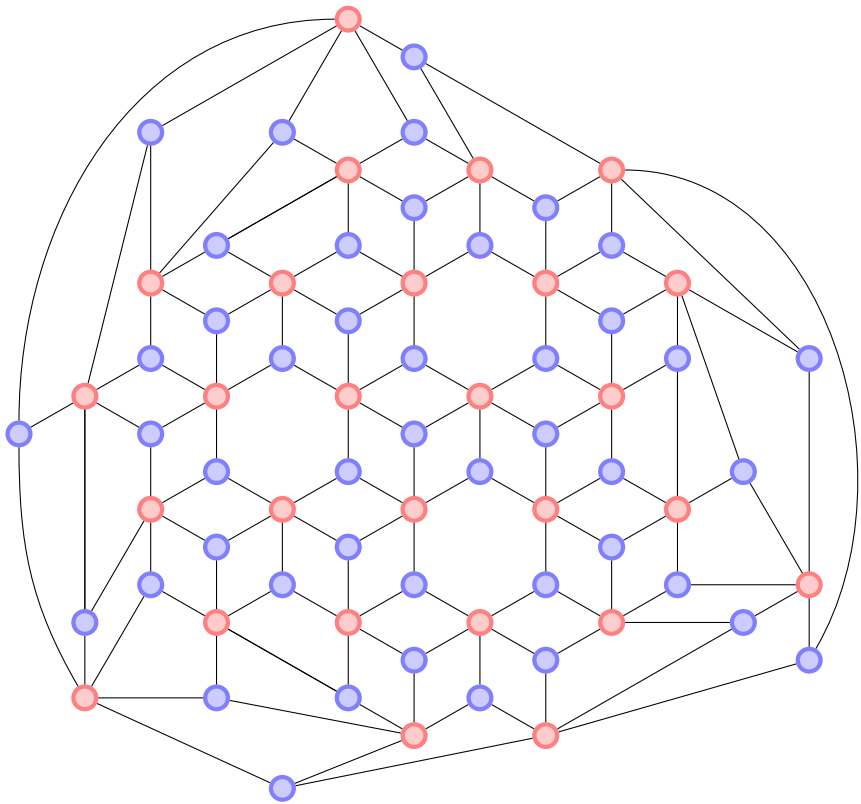

\underline{Non-planar cases.} To complete the proof we have to show that the pairs $(3^{5} \, | \, 3^{5})$ and $(3^{25} \, | \, 5^{15})$ are not planar. In the both cases, we follow the same strategy, although the second case is substantially more technical.

%Then its plane drawing contains one hexagonal face and six quadrilateral faces.
Suppose the pair $(3^{5} \, | \, 3^{5})$ is planar. There are obviously $15$ edges and $10$ vertices, so using Euler's formula, it is not difficult to see that there is necessarily one hexagonal face and six quadrilateral faces. Take a point in the hexagonal face and join it to the three vertices belonging to the second part by non-crossing arcs. We obtain a plane drawing of the pair $(3^{6} \, | \, 4^3, 3^2)$ in which all the faces are quadrilaterals. We now join by a diagonal, in every quadrilateral, the two vertices belonging to the second part, and then remove from the resulting drawing the vertices of the first part and the interiors of all the edges incident to those vertices. We obtain a plane drawing of the sequence $(4^3, 3^2)$. Note that by construction, the faces of that drawing are in one-to-one correspondence with the vertices of the first part and moreover, as all those vertices were of degree $3$, all the faces of the drawing are triangles. We also note that the three vertices of degree $4$ lie on the boundary of one face.

If the pair $(3^{25} \, | \, 5^{15})$ is planar, then its plane drawing has $75$ edges and $40$ vertices. Then there are $37$ faces and so, similar to the previous case, the drawing has one hexagonal face and all the other faces are quadrilateral. Then the same procedure applied to the pair $(3^{25} \, | \, 5^{15})$ produces a plane drawing of the sequence $(6^3, 5^{12})$ all of whose faces are triangles and such that the three vertices of degree $6$ lie on the boundary of one face.

\begin{lemma} \label{l:simple}
The underlying graphs of the both drawings are simple.
\end{lemma}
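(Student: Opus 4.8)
The derived drawings can fail to be simple only by having a loop or a pair of parallel edges, and both defects can be located inside the intermediate bipartite drawing $G$ --- the drawing of $(3^6\,|\,4^3,3^2)$, resp.\ of $(3^{26}\,|\,6^3,5^{12})$, all of whose faces are quadrilaterals. Since the edges of the derived drawing are exactly the diagonals inserted (one per quadrilateral face of $G$), since each diagonal joins two vertices of the second part while each edge of $G$ joins the two parts, and since distinct faces have disjoint interiors, the plan is to prove: (a) no quadrilateral face of $G$ is bounded by a closed walk that repeats a second-part vertex (this excludes loops), and (b) no two distinct quadrilateral faces of $G$ carry the same unordered pair of second-part vertices on their boundaries (this excludes parallel edges). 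The lemma is then immediate.

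The first step is to show that the hypothetical realisation $G_0$, and hence $G$ (obtained from $G_0$ by inserting one degree-$3$ vertex inside the hexagonal face), is $2$-connected. Connectedness is forced, since in a bipartite plane graph with no vertex of degree $\le 1$ every face has length $\ge 4$, so $2|E|\ge 4|F|$, and this is incompatible with $|F|=|E|-|V|+1+c$ for $c\ge 2$ in either case. For the absence of a cut vertex $v$: each component $A$ of $G_0-v$ spans, together with $v$, a bipartite planar block in which $v$ has degree $d<\deg_{G_0}(v)$ while all other vertices keep their degrees. For $(3^5\,|\,3^5)$ some such block has $d=1$, and then its two colour classes have degree-sums that are a multiple of $3$ and one more than a multiple of $3$, hence unequal --- impossible in a bipartite graph. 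For $(3^{25}\,|\,5^{15})$ one adds Euler's inequality $|E|\le 2|V|-4$: each block is then forced to contain at least $2d+6$ (when $v$ is in the first part) or $d+6$ (when $v$ is in the second) vertices of the second part, and summing over the at least two blocks exceeds $15$. In a $2$-connected plane graph every face boundary is a cycle, so each quadrilateral face of $G$ is a genuine $4$-cycle; in particular (a) holds, and the hexagonal face of $G_0$ was a $6$-cycle, so the construction is well defined.

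For (b), suppose $Q=uv_1wv_2$ and $Q'=uv_3wv_4$ are distinct quadrilateral faces with the same second-part pair $\{u,w\}$. Were $Q$ and $Q'$ to share a first-part vertex, that vertex would, in both faces, occupy the unique corner between its two edges to $u$ and to $w$, forcing $Q=Q'$; so $v_1,v_2,v_3,v_4$ are distinct and $\partial Q\cup\partial Q'$ is a copy of $K_{2,4}$ with colour classes $\{u,w\}$ and $\{v_1,v_2,v_3,v_4\}$, embedded so that $Q,Q'$ are two of its four quadrilateral faces --- opposite ones, since adjacent faces of $K_{2,4}$ share a leaf. Hence this $K_{2,4}$ divides the sphere into $Q$, $Q'$ (faces of $G$, so empty) and two closed discs $R_1,R_2$ bounded by $4$-cycles, which between them carry all the remaining vertices and edges. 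On each $R_i$ I would apply Euler's inequality and the bipartite degree balance: on the boundary, $u$ and $w$ have small controlled degree inside $R_i$ and the two first-part vertices have degree $3$, while the interior vertices keep their full degree. For $(3^5\,|\,3^5)$ this already gives a contradiction: each $R_i$ must have at least two interior vertices, so --- since together they hold only five --- one of them, say $R_1$, has exactly two, which Euler forces to have degree $3$; the degree-sums of the two colour classes of $R_1$ are then $6+3k$ and $10-3k$ for some integer $k$, and $6+3k=10-3k$ has no solution. This settles (b) for the first drawing.

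For $(3^{25}\,|\,5^{15})$ the same scheme is followed, and this is where essentially all the difficulty lies. Second-part vertices now have degree $5$ or $6$, so $u$ and $w$ may carry edges outside the $K_{2,4}$, the regions $R_1,R_2$ are large, and the plain counting above is no longer self-contradicting; it must be refined --- re-applying the face-and-degree bookkeeping to $4$-cycle-bounded sub-regions of the $R_i$, and using the extra structure coming from the three degree-$6$ vertices together with the inserted vertex (which span a triangle in the derived drawing). I expect this refined analysis of the $K_{2,4}$-configuration in the second drawing to be the main obstacle; the remaining ingredients --- the reduction to (a) and (b), the $2$-connectedness count, and the emergence of the $K_{2,4}$ --- are routine.
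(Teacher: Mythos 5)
Your reduction to conditions (a) and (b) on the intermediate quadrangulation is sound, and your route is genuinely different from the paper's: the paper works directly on the derived triangulated drawing, shows that a double edge $e_1\cup e_2$ between $A$ and $B$ forces $\delta_i(A)+\delta_i(B)\ge 4$ edges pointing into each of the two complementary domains (so $\deg A+\deg B\ge 12$), which kills the $(4^3,3^2)$ case outright and forces $A,B$ to both have degree $6$ in the other case; it then finishes with a ``minimal circle'' argument that produces a forced second double edge between two degree-$5$ vertices. Your $2$-connectedness count, the mod-$3$ degree-sum argument for $(3^5\,|\,3^5)$, and the $K_{2,4}$ set-up are all workable, and the first drawing is essentially dealt with.

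The genuine gap is the second drawing, and you have named it yourself without closing it: for $(3^{26}\,|\,6^3,5^{12})$ you only announce that the counting ``must be refined'' by ``re-applying the face-and-degree bookkeeping to $4$-cycle-bounded sub-regions'' and invoking the triangle of degree-$6$ vertices, but no such refinement is carried out, and it is not evident that one exists along these lines. This is not a routine loose end: it is the majority of the content of the lemma. In your configuration $u$ and $w$ may each send up to two further edges into $R_1\cup R_2$, the regions $R_1,R_2$ can contain up to $34$ of the $41$ vertices, and a bare Euler/degree-balance count on a $4$-cycle-bounded region of a near-quadrangulation with interior degrees $3$ and $5$ does not by itself yield a contradiction (indeed $(6^3,5^{12})$ \emph{is} realisable as a triangulation, so any successful argument must use the placement of the three degree-$6$ vertices on a common face, i.e.\ the common hexagon of $G_0$ --- an ingredient your sketch mentions but never actually deploys). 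Until you exhibit the refined analysis, the lemma is unproven in the case that matters most; I would suggest either completing the $K_{2,4}$ analysis in full, or switching at this point to an argument on the triangulated drawing, where the local structure around a double edge (every face a triangle, all but three degrees equal to $5$) is much more rigid and leads to the contradiction via forced incidences rather than global counting.
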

\begin{proof}
  We start by considering both drawings simultaneously. The fact that there are no loops follows from the fact that the original drawings had no multiple edges.

  Suppose a pair of vertices $A, B$ is joined by two edges $e_1, e_2$. We can view our drawing as a drawing on the sphere. Consider the two domains $U_1, U_2$ into which the simple closed curve $c=e_1 \cup e_2$ splits the sphere. For $i=1,2$, denote $\delta_i(A)$ and $\delta_i(B)$ the number of edges incident to $A$ (respectively $B$) which point towards $U_i$. As all the faces of the drawing are triangular and there are no loops, we have $\delta_i(A), \delta_i(B) \ge 1$. Let us assume that $\delta_1(A) \le \delta_1(B)$. Suppose first that $\delta_1(A)=1$. If $\delta_1(B)=1$, we get a drawing in the closure of $U_1$ as on the left in Figure~\ref{fig:simple}. This is a contradiction, as both domains must be faces and the graph has no vertices of degree $2$. If $\delta_1(B)=2$, we get a drawing in the closure of $U_1$ as in the middle in Figure~\ref{fig:simple} (there may be some more edges incident to $C$ which are not shown). This is again a contradiction, as the two edges joining $B$ and $C$ must lie on the boundary of a single triangular face. It follows that $\delta_1(B) \ge 3$. Therefore we always have $\delta_1(A) + \delta_1(B) \ge 4$, and similarly $\delta_2(A) + \delta_2(B) \ge 4$. Then the sum of the degrees of the vertices $A$ and $B$ is at least $12$. This establishes the claim for the graph $(4^3, 3^2)$.
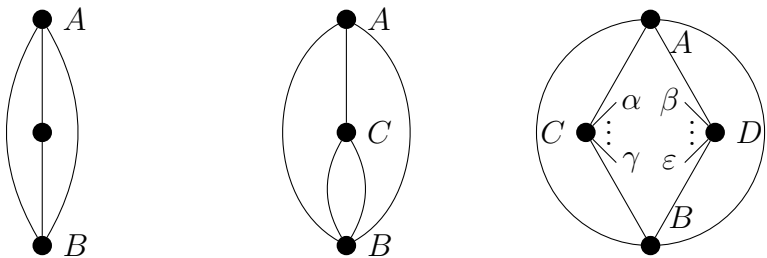
\begin{figure}[h!]
\pgfmathsetmacro{\rx}{4}
\tikzstyle{black} = [circle,draw=black,fill=black,ultra thick,inner sep=0pt,minimum size=2mm]
\begin{tikzpicture}
\node [black] (A1) at (0,3) [label=0:$A$] {};
\node [black] (B1) at (0,0) [label=0:$B$] {};
\node [black] (C1) at (0,1.5) {};
\draw (A1)--(B1);
\draw (B1) to [out=120,in=-120] (A1);
\draw (B1) to [out=60,in=-60] (A1);
\node [black] (A2) at (\rx,3) [label=0:$A$] {};
\node [black] (B2) at (\rx,0) [label=0:$B$] {};
\node [black] (C2) at (\rx,1.5) [label=0:$C$] {};
\draw (A2)--(C2);
\draw (B2) to [out=150,in=-150] (A2);
\draw (B2) to [out=30,in=-30] (A2);
\draw (B2) to [out=60,in=-60] (C2);
\draw (B2) to [out=120,in=-120] (C2);
\node [black] (A3) at (2*\rx,3) [label={[xshift=0.4cm, yshift=-0.72cm]:$A$}] {};
\node [black] (B3) at (2*\rx,0) [label=45:$B$] {};
\node [black] (C3) at (2*\rx-0.85,1.5) [label=180:$C$] {};
\node [black] (D3) at (2*\rx+0.85,1.5) [label=0:$D$] {};
\draw (A3)--(C3)--(B3)--(D3)--(A3);
\draw (2*\rx,1.5) circle (1.5);
\draw ($(C3)+(0.4,0.4)$)--(C3)--($(C3)+(0.4,-0.4)$);
\draw ($(D3)+(-0.4,0.4)$)--(D3)--($(D3)+(-0.4,-0.4)$);
\node at ($(C3)+(0.3,0.1)$) {$\vdots$};
\node at ($(D3)+(-0.3,0.1)$) {$\vdots$};
\node at ($(C3)+(0.6,0.4)$) {$\alpha$};
\node at ($(C3)+(0.6,-0.4)$) {$\gamma$};
\node at ($(D3)+(-0.6,0.4)$) {$\beta$};
\node at ($(D3)+(-0.6,-0.4)$) {$\varepsilon$};
\end{tikzpicture}
\caption{The part of the drawing in the closure of $U_1$.}
\label{fig:simple}
\end{figure}
  For the graph $(6^3, 5^{12})$, the only possibility is that both $A$ and $B$ are of degree $6$. Continuing with the drawing of the graph $(6^3, 5^{12})$ we notice that no two vertices can be joined by more than two edges (otherwise, by the argument above, their degrees are too high). For every pair of vertices joined by two edges, the union of the edges is a simple closed curve $c$; call it a \emph{circle}. No two circles may cross, but some of them can touch at a vertex (note that we have only three vertices of degree $6$, so there are no more than three circles). We can now choose a circle $c$ with the property that one of the domains $U_1$ bounded by it contains no points of any other circle (call such a domain \emph{minimal}). Denote $U_2$ the other domain bounded by $c$ and let $A, B \in c$ be the vertices of the drawing joined by two edges $e_1, e_2$ with $c=e_1 \cup e_2$. With the notation as above, we cannot have $\delta_1(A)=1$ or $\delta_1(B)=1$ as this creates double edges lying in the closure of $U_1$ hence contradicting the fact that $U_1$ is minimal. Then $\delta_1(A), \delta_1(B) \ge 2$, and so $\delta_2(A), \delta_2(B) \le 2$. From the argument above, $\delta_2(A) \ge 1$ and if $\delta_2(A)=1$, then $\delta_2(B) \ge 3$. It follows that $\delta_i(A)=\delta_i(B) = 2$, for $i=1,2$. Now consider the part of the drawing lying in the closure of $U_1$. The two edges incident to $A$ and pointing towards $U_1$ cannot have a common endpoint (other than $A$). The same is true for the two edges incident to $B$ and pointing towards $U_1$, so the two triangular domains containing $e_1$ (respectively $e_2$) on their boundaries are as on the right in Figure~\ref{fig:simple}. The degree of each of the vertices $C, D$ is at least $5$; the starting segments of some of the edges incident to them are shown in Figure~\ref{fig:simple}. In the rotation diagram at $C$, the segments $C\gamma$ (respectively $C\alpha$) is the first (respectively the last) when we turn in the positive direction from $CB$ to $CA$. For $D$, we label the starting segments similarly. Then the path $\alpha CAD\beta$ lies on the boundary of a face, so the segments $C\alpha$ and $D\beta$ lie on the same edge. By a similar argument, the segments $C\gamma$ and $D\varepsilon$ lie on the same edge. This gives two edges joining $C$ and $D$ which contradicts the fact that $U_1$ is minimal.
\end{proof}

We need to show that the degree sequence $(4^3, 3^2)$ (respectively $(6^3,5^{12})$) admits no plane drawing all of whose faces are triangles, with all the three vertices of degree $4$ (respectively $6$) lying on the boundary of one face and such that the underlying graph is simple. Assuming such a drawing exists, we call a \emph{partial drawing} a drawing of a subset of vertices with some edges connecting them, and with starting segments of all the other edges adjacent to these vertices drawn in such a way that at each vertex, we have the same rotation diagram as in the final drawing.

For the degree sequence $(4^3, 3^2)$, we start with the partial drawing containing three vertices of degree $4$, the edges connecting them and the starting segments of all the other edges incident to these vertices, as in Figure~\ref{fig:4332}. The triangular face having the vertices of degree $4$ on its boundary is the outer face. An easy inspection shows that we cannot add two vertices of degree $3$ and connect the edges to get the required drawing.
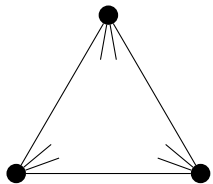
\begin{figure}[h!]
\pgfmathsetmacro{\ra}{1.4}
\pgfmathsetmacro{\rb}{0.6}
\pgfmathsetmacro{\co}{cos(30)}
\pgfmathsetmacro{\si}{sin(30)}
\pgfmathsetmacro{\coa}{cos(10)}
\pgfmathsetmacro{\sia}{sin(10)}
\tikzstyle{black} = [circle,draw=black,fill=black,ultra thick,inner sep=0pt,minimum size=2mm]
\begin{tikzpicture}
\node [black] (v1) at (0,\ra) {};
\node [black] (v2) at (-\ra*\co,-\ra*\si) {};
\node [black] (v3) at (\ra*\co,-\ra*\si) {};
\draw (v1)--(v2)--(v3)--(v1);
\draw ($(v2)+({\rb*cos(20)},{\rb*sin(20)})$)--(v2)--($(v2)+({\rb*cos(40)},{\rb*sin(40)})$);
\draw ($(v3)+({-\rb*cos(20)},{\rb*sin(20)})$)--(v3)--($(v3)+({-\rb*cos(40)},{\rb*sin(40)})$);
\draw ($(v1)+({\rb*\sia},{-\rb*\coa})$)--(v1)--($(v1)+({-\rb*\sia},{-\rb*\coa})$);
\end{tikzpicture}
\caption{The partial drawing for the sequence $(4^3, 3^2)$.}
\label{fig:4332}
\end{figure}

Consider the degree sequence $(6^3, 5^{12})$. Note that this sequence \emph{is} planar \cite{SH, GM}, but, as we will show, in no plane drawing there is a face with three vertices of degree $6$. We also note that the dual to a planar graph with such a degree sequence is known as a \emph{fullerene graph} \cite{Mal}; such graphs are extensively studied in chemistry \cite{FM}.

Assuming that the required drawing exists, we start with a partial drawing as on the left in Figure~\ref{fig:63512start}, with the three vertices of degree $6$ and with the corresponding face to be the outer domain of the circle. All the remaining vertices will be of degree $5$.
\begin{figure}[h!]
\pgfmathsetmacro{\ra}{2.5}
\pgfmathsetmacro{\rb}{1}
\pgfmathsetmacro{\rc}{0.7}
\pgfmathsetmacro{\rx}{8}
\pgfmathsetmacro{\co}{cos(30)}
\pgfmathsetmacro{\si}{sin(30)}
\pgfmathsetmacro{\coa}{cos(10)}
\pgfmathsetmacro{\sia}{sin(10)}
\pgfmathsetmacro{\cob}{cos(20)}
\pgfmathsetmacro{\sib}{sin(20)}
\tikzstyle{black} = [circle,draw=black,fill=black,ultra thick,inner sep=0pt,minimum size=2mm]
\begin{tikzpicture}
\node [black] (v1) at (0,\ra) {};
\node [black] (v2) at (-\ra*\co,-\ra*\si) {};
\node [black] (v3) at (\ra*\co,-\ra*\si) {};
\draw (0,0) circle (\ra);
\draw ($(v1)+({\rb*\si},{-\rb*\co})$)--(v1)--($(v1)+({-\rb*\si},{-\rb*\co})$);
\draw ($(v1)+({\rb*\sia},{-\rb*\coa})$)--(v1)--($(v1)+({-\rb*\sia},{-\rb*\coa})$);
\draw ($(v2)+({\rb*cos(40)},{\rb*sin(40)})$)--(v2)--($(v2)+({\rb*\cob},{\rb*\sib})$);
\draw ($(v2)+(\rb,0)$)--(v2)--($(v2)+({\rb*cos(60)},{\rb*sin(60)})$);
\draw ($(v3)+({-\rb*cos(40)},{\rb*sin(40)})$)--(v3)--($(v3)+({-\rb*\cob},{\rb*\sib})$);
\draw ($(v3)+(-\rb,0)$)--(v3)--($(v3)+({-\rb*cos(60)},{\rb*sin(60)})$);
\node [black] (w1) at (\rx,\ra) {};
\node [black] (w2) at (\rx-\ra*\co,-\ra*\si) {};
\node [black] (w3) at (\rx+\ra*\co,-\ra*\si) {};
\draw (\rx,0) circle (\ra);
\draw ($(w1)+({\rb*\sia},{-\rb*\coa})$)--(w1)--($(w1)+({-\rb*\sia},{-\rb*\coa})$);
\draw ($(w2)+({\rb*cos(40)},{\rb*sin(40)})$)--(w2)--($(w2)+({\rb*\cob},{\rb*\sib})$);
\draw ($(w3)+({-\rb*cos(40)},{\rb*sin(40)})$)--(w3)--($(w3)+({-\rb*\cob},{\rb*\sib})$);
\draw (w1)--(w2)--(w3)--(w1);
\node [black] (w6) at ($0.5*(w1)+0.5*(w2)$) {};
\node [black] (w4) at ($0.5*(w2)+0.5*(w3)$) {};
\node [black] (w5) at ($0.5*(w3)+0.5*(w1)$) {};
\draw ($(w4)+({-\rc*\cob},{\rc*\sib})$)--(w4)--($(w4)+({\rc*\cob},{\rc*\sib})$); \draw ($(w4)+(0,\rc)$)--(w4);
\draw ($(w6)+({-\rc*\sia},{-\rc*\coa})$)--(w6)--($(w6)+({\rc*cos(40)},{\rc*sin(40)})$); \draw ($(w6)+(\rc*\co,-\rc*\si)$)--(w6);
\draw ($(w5)+({\rc*\sia},{-\rc*\coa})$)--(w5)--($(w5)+({-\rc*cos(40)},{\rc*sin(40)})$); \draw ($(w5)+(-\rc*\co,-\rc*\si)$)--(w5);
\end{tikzpicture}
\caption{The starting partial drawing for the sequence $(6^3, 5^{12})$.}
\label{fig:63512start}
\end{figure}
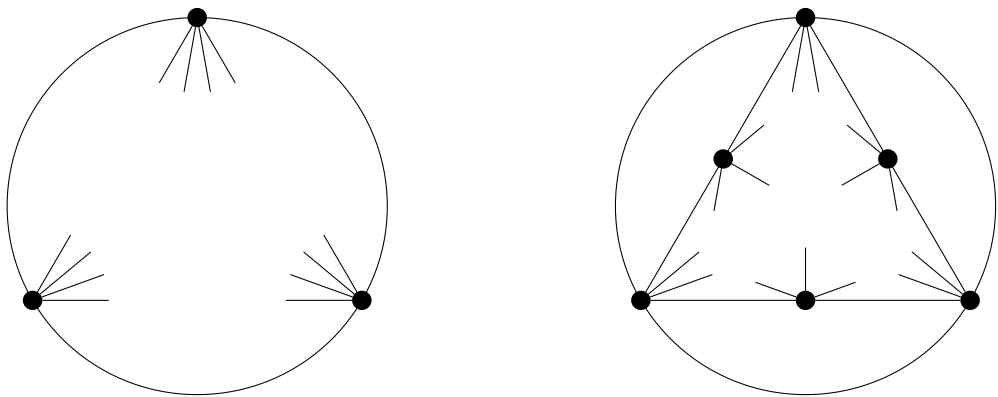
For each of the three edges, consider the two starting segments closest to it in the rotation diagrams at its endpoints. Then the edge and the two segments lie on the boundary of a single face, so that the edges of the drawing which extend these two starting segment share a common vertex. That vertex cannot be one of the three existing vertices and the three new vertices so constructed are pairwise distinct (as the graph is simple). We get the partial drawing as on the right in Figure~\ref{fig:63512start}. The starting segments at the three new vertices point towards the hexagonal domain, as the three triangular domains are faces. %This fact follows from Lemma~\ref{l:badt} below which we will use several times throughout the proof. each of

For $a, b, c \ge 0$, we call an \emph{$(a,b,c)$-triangle} a partial drawing consisting of a drawing of a $3$-cycle $ABC$ with $a$ (respectively $b, c$) starting segments incident to vertex $A$ (respectively $B, C$), with all the starting segments pointing to the same domain $U$ in the complement of the drawing of the $3$-cycle $ABC$. We say that a triangle is \emph{bad}, if there is no graph drawing in the closure of $U$ for which all the faces are triangular, all the vertices are either the vertices $A, B, C$ or lie in $U$ and have degree $5$, and the only edges incident to $A, B, C$ are the extensions of the given starting segments.

\begin{lemma} \label{l:badt}
  The triangles $(x,0,0), \, (x,y,0), \, (x, 1, 1), \, (2,2,1)$, where $x, y \in \mathbb{N}$, are bad.
\end{lemma}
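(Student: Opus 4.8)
The plan is to rule out, for each of the four families, the existence of a triangulated fill by exhibiting a forbidden local configuration; the four cases form a short chain, with the last reducing to the third.

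\emph{Set-up.} Suppose some $(a,b,c)$-triangle $ABC$, with starting segments pointing into a domain $U$, is good, and let $D$ be a witnessing drawing: a triangulation of $\overline U$ with outer $3$-cycle $ABC$, every interior vertex of degree $5$, and such that inside $\overline U$ the edges at $A$ (resp. $B$, $C$) are the two boundary edges together with the given $a$ (resp. $b$, $c$) starting segments. Thus the degree of $A$ in $D$ is $a+2$, and likewise for $B,C$; write $n$ for the number of interior vertices (a short Euler-characteristic count even gives $n=a+b+c$, though we shall not need this). The first observation: if a boundary vertex, say $B$, carries no starting segment, then it has degree $2$ in $D$, so the one angular sector at $B$ between the edges $BA$ and $BC$ is a single face; being a triangle with sides $BA$ and $BC$ it equals $ABC$, whence all three boundary edges bound the face $ABC$ and $D$ is just that triangle, so $a=b=c=0$. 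Consequently, if the three counts are not all zero they are all positive, and this already disposes of the families $(x,0,0)$ and $(x,y,0)$, $x,y\in\mathbb N$.

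\emph{The family $(x,1,1)$.} Here $B$ and $C$ have degree $3$ in $D$. Let $P$ be the neighbour of $B$ other than $A$ and $C$; since the graph is simple, $P$ is an interior vertex. The two faces at $B$ are then $ABP$ and $BPC$, so $P$ is joined to both $A$ and $C$; hence $P$ is also the third neighbour of $C$, and the two faces at $C$ are $BPC$ and $CPA$. Therefore $P$ is joined to $A$, $B$, $C$, and the three pairwise distinct triangles $ABP$, $BPC$, $CPA$ fill consecutive angular sectors around $P$ and close up its whole rotation (after $CPA$ one returns to the edge $PA$), forcing $\deg_D(P)=3$ --- contradicting that the interior vertex $P$ has degree $5$. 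So every $(x,1,1)$-triangle is bad.

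\emph{The triangle $(2,2,1)$.} I would reduce this to the preceding case. Now $C$ has degree $3$; let $P$ be its neighbour other than $A,B$, so, as above, the faces at $C$ are $ACP$ and $BCP$ and the link of $C$ is the path $A\,P\,B$. Deleting the open star of $C$ from $D$ produces a triangulation $D_1$ of a disk with outer $3$-cycle $APB$; every vertex interior to $D_1$ was interior to $D$ and not adjacent to $C$ (whose only neighbours are $A,B,P$), so it still has degree $5$, while now $A$ and $B$ have degree $3$ and $P$ has degree $4$. Thus $D_1$ witnesses that the $(1,2,1)$-triangle is good --- but $(1,2,1)$ is an $(x,1,1)$-triangle with $x=2$, which is bad. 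This contradiction shows $(2,2,1)$ is bad.

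\emph{Main obstacle.} The bookkeeping is light; the care is in two places. In the argument for $(x,1,1)$ one must be sure the three triangles $ABP$, $BPC$, $CPA$ genuinely sweep out a full neighbourhood of $P$ --- i.e. that two edges of the rotation at $P$ with a prescribed face between them are consecutive, and that following the three faces around returns to the start using exactly these three edges. In the reduction for $(2,2,1)$ one must check that deleting the open star of a degree-$3$ boundary vertex really yields a triangulated disk with the claimed boundary $3$-cycle, and that no surviving interior vertex loses an edge. Both points are routine in the rotation-system language in which the earlier lemmas are phrased, but they are where a careless argument would break.
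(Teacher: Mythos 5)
Your proof is correct, and while it follows the same overall strategy as the paper --- analysing the triangular faces forced at the boundary vertices and reducing $(2,2,1)$ to the $(x,1,1)$ case --- the mechanism for each contradiction is genuinely different. The paper disposes of $(x,0,0)$ by exhibiting a non-triangular face and of $(x,y,0)$ by forcing a second edge $AB$; you unify both by observing that a boundary vertex with no starting segment has degree $2$, so its unique face is bounded by the $3$-cycle $ABC$ itself, whence $U$ is a single face and $a=b=c=0$. For $(x,1,1)$ the paper lets the degree-$5$ condition at the forced interior vertex produce two parallel edges to $A$; you instead show that the three faces $ABP$, $BPC$, $CPA$ close up the rotation at $P$ and force $\deg P=3$, contradicting degree $5$ directly --- a slightly cleaner contradiction that never uses the segments at $A$. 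For $(2,2,1)$ the paper adds the forced vertex and two forced edges to create a bad $(2,1,1)$ sub-triangle, whereas you delete the open star of the degree-$3$ vertex $C$ to exhibit a good $(1,2,1)$-triangle; these are the same reduction run in opposite directions. The two points you flag as delicate do hold: the three triangles exhaust the rotation at $P$ because each of $PA,PB,PC$ acquires both of its cyclic neighbours inside $\{PA,PB,PC\}$, which in a single cyclic order forces $\deg P=3$; and after deleting the star of $C$ the two leftover segments at $P$ lie in the sectors between $PB$ and $PA$ away from $C$, hence point into the new domain, so $D_1$ is a legitimate witness.
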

\begin{proof} %(note that there can be just a single starting segment incident to $A$)
  For the triangle $(x,0,0), \, x >0$, consider the partial drawing on the left in Figure~\ref{fig:bad}. Then in any drawing extending it, the path $\alpha ABC \beta$ lies on the boundary of a single face which contradicts the fact that all the faces must be triangular. Consider the triangle $(x,y,0), \, x, y >0$, and assume that there are no starting segments attached to $C$ and that the vertices $A, B, C$ are labelled in the positive direction. Let $A\alpha$ be the closest starting segment to $AC$ in the rotation diagram at $A$, and $B\beta$ be the closest starting segment to $BC$ in the rotation diagram at $B$. Then $A\alpha$ and $B\beta$ must be parts of the same edge which contradicts the fact that the graph is simple.
\begin{figure}[h!]
\pgfmathsetmacro{\ra}{2.5}
\pgfmathsetmacro{\rb}{1}
\pgfmathsetmacro{\rc}{0.7}
\pgfmathsetmacro{\rx}{5.3}
\pgfmathsetmacro{\co}{cos(30)}
\pgfmathsetmacro{\si}{sin(30)}
\pgfmathsetmacro{\coa}{cos(10)}
\pgfmathsetmacro{\sia}{sin(10)}
\pgfmathsetmacro{\cob}{cos(20)}
\pgfmathsetmacro{\sib}{sin(20)}
\tikzstyle{black} = [circle,draw=black,fill=black,ultra thick,inner sep=0pt,minimum size=2mm]
\begin{tikzpicture}
\node [black] (vA) at (0,\ra) [label=0:$A$] {};
\node [black] (vB) at (-\ra*\co,-\ra*\si) [label=-90:$B$] {};
\node [black] (vC) at (\ra*\co,-\ra*\si) [label=-90:$C$] {};
\draw (vA)--(vB)--(vC)--(vA);
\node (p) at ($(vA)+(0,-1.3)$) {${\alpha \; \beta}$};
\draw ($(vA)+(-0.2,-\rb)$)--(vA)--($(vA)+(0.2,-\rb)$);
\draw ($(vA)+(0,-\rb)$)--(vA);
\node [black] (wA) at (\rx,\ra) [label=0:$A$] {};
\node [black] (wB) at (\rx-\ra*\co,-\ra*\si) [label=-90:$B$] {};
\node [black] (wC) at (\rx+\ra*\co,-\ra*\si) [label=-90:$C$] {};
\draw (wA)--(wB)--(wC)--(wA);
\node (p) at ($(wA)+(0,-1.3)$) {${\alpha \; \beta}$}; %{\footnotesize $1$} [label=-90:${1 \dots x}$]
\draw ($(wA)+(-0.2,-\rb)$)--(wA)--($(wA)+(0.2,-\rb)$);
\draw ($(wA)+(0,-\rb)$)--(wA);
\node (x) at ($(wB)+(1.2*\co,1.2*\si)$) {$\gamma$}; %[label=30:$\gamma$]
\node (y) at ($(wC)+(-1.2*\co,1.2*\si)$) {$\delta$}; %[label=150:$\delta$]
\draw (x)--(wB); \draw (y)--(wC);
\node [black] (zA) at (2*\rx,\ra) [label=0:$A$] {};
\node [black] (zB) at (2*\rx-\ra*\co,-\ra*\si) [label=-90:$B$] {};
\node [black] (zC) at (2*\rx+\ra*\co,-\ra*\si) [label=-90:$C$] {};
\draw (zA)--(zB)--(zC)--(zA);
\node (p) at ($(zA)+(0,-1.3)$) {${\alpha \; \beta}$}; %{\footnotesize $1$} [label=-90:${1 \dots x}$]
\draw ($(zA)+(-0.2,-\rb)$)--(zA)--($(zA)+(0.2,-\rb)$);
\draw ($(zA)+(0,-\rb)$)--(zA);
\node [black] (zD) at (2*\rx,0) [label=-90:$D$] {};
\draw (zB)--(zD)--(zC);
\node (q) at ($(zD)+(0,0.7)$) {${\mu \quad \nu}$};
\draw ($(zD)+(-0.3,0.5)$)--(zD)--($(zD)+(0.3,0.5)$);
\draw ($(zD)+(0,0.5)$)--(zD);
\end{tikzpicture}
\caption{Bad triangles.}
\label{fig:bad}
\end{figure}
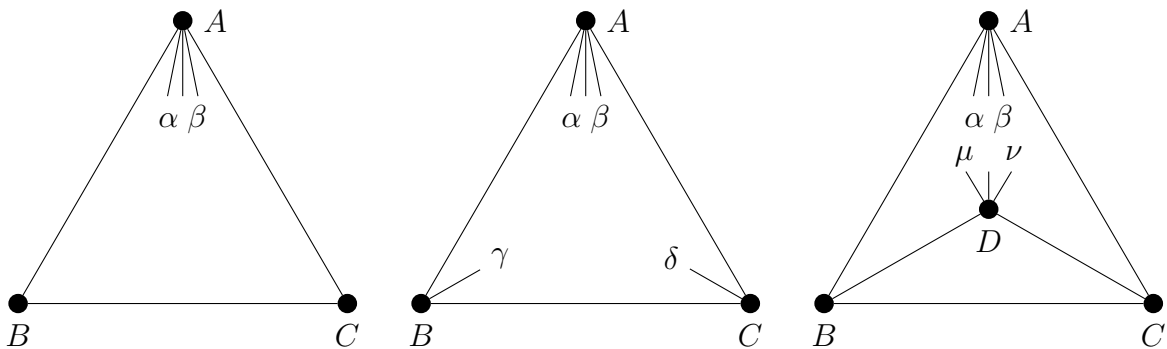
  Now consider the triangle $(x, 1, 1), \, x > 0$, as in the middle in Figure~\ref{fig:bad}. In any drawing which extends it, the path $\gamma BC\delta$ lies on the boundary of a triangular face, and so the edges containing the segments $B \gamma$ and $C \delta$ share a common vertex $D$. Note that the three starting segments at vertex $D$ point outside the triangle $DBC$ (otherwise $DBC$ is a bad triangle), as on the right in Figure~\ref{fig:bad}. But then the segments $D\mu$ and $A\alpha$ lie on the same edge and the segments $D\nu$ and $A\beta$ lie on the same edge which contradicts the fact that the graph is simple. By similar arguments, for the triangle $(2,2,1)$, we are forced to add two edges which then produces a bad triangle $(2,1,1)$.
\end{proof}

We now consider the six edges lying on the boundary of the inner triangular domain of the partial drawing on the right in Figure~\ref{fig:63512start}. Each of them lies on the boundary of a triangular face other than the one already shown in Figure~\ref{fig:63512start}. The third vertex of that face is the common endpoint of the two edges which extend the two closest starting segment at their endpoints of the edge. We first show that none of these vertices can be one of the existing six vertices. By symmetry, we can consider one of the six edges and then, as the graph is simple, if the two edges extending the two starting segments have one of the two existing vertices as their endpoint, then we get three possible partial drawings shown in Figure~\ref{fig:63512three}. In all three cases, we get a bad triangle, $(3,1,0),(2,2,1)$ and $(2,2,0)$ respectively.

%by Lemma~\ref{l:badt}. In the two cases in Figure~\ref{fig:63512two}, the bad triangles are. In the other two cases, we get bad triangles $(1,0,0)$ and .

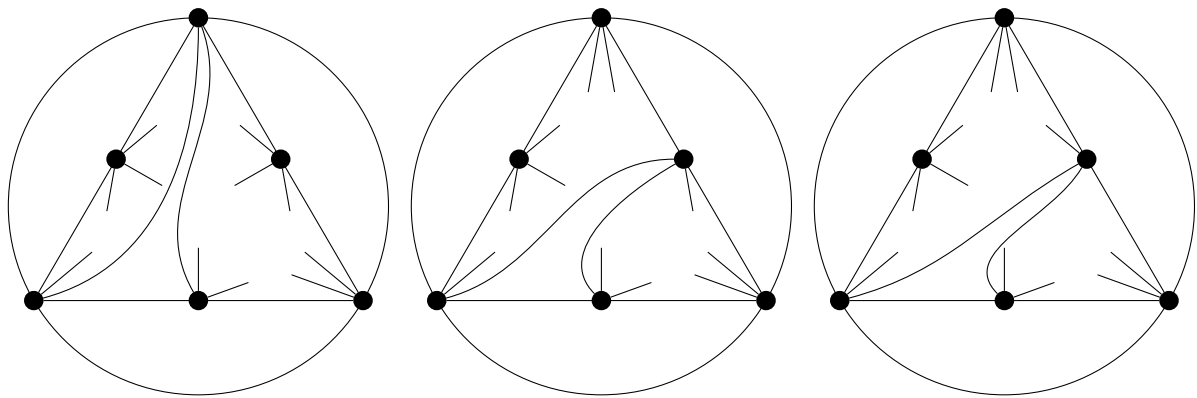
\begin{figure}[h!]
\pgfmathsetmacro{\ra}{2.5}
\pgfmathsetmacro{\rb}{1}
\pgfmathsetmacro{\rc}{0.7}
\pgfmathsetmacro{\rx}{5.3}
\pgfmathsetmacro{\co}{cos(30)}
\pgfmathsetmacro{\si}{sin(30)}
\pgfmathsetmacro{\coa}{cos(10)}
\pgfmathsetmacro{\sia}{sin(10)}
\pgfmathsetmacro{\cob}{cos(20)}
\pgfmathsetmacro{\sib}{sin(20)}
\tikzstyle{black} = [circle,draw=black,fill=black,ultra thick,inner sep=0pt,minimum size=2mm]
\begin{tikzpicture}
\node [black] (v1) at (0,\ra) {};
\node [black] (v2) at (-\ra*\co,-\ra*\si) {};
\node [black] (v3) at (\ra*\co,-\ra*\si) {};
\draw (0,0) circle (\ra);
\draw ($(v2)+({\rb*cos(40)},{\rb*sin(40)})$)--(v2);
\draw ($(v3)+({-\rb*cos(40)},{\rb*sin(40)})$)--(v3)--($(v3)+({-\rb*\cob},{\rb*\sib})$);
\draw (v1)--(v2)--(v3)--(v1);
\node [black] (v6) at ($0.5*(v1)+0.5*(v2)$) {};
\node [black] (v4) at ($0.5*(v2)+0.5*(v3)$) {};
\node [black] (v5) at ($0.5*(v3)+0.5*(v1)$) {};
\draw (v4)--($(v4)+({\rc*\cob},{\rc*\sib})$); \draw ($(v4)+(0,\rc)$)--(v4);
\draw ($(v6)+({-\rc*\sia},{-\rc*\coa})$)--(v6)--($(v6)+({\rc*cos(40)},{\rc*sin(40)})$); \draw ($(v6)+(\rc*\co,-\rc*\si)$)--(v6);
\draw ($(v5)+({\rc*\sia},{-\rc*\coa})$)--(v5)--($(v5)+({-\rc*cos(40)},{\rc*sin(40)})$); \draw ($(v5)+(-\rc*\co,-\rc*\si)$)--(v5);
\draw (v2) to [out=15,in=-90] (v1);
\draw (v4) to [out=120,in=-70] (v1);
\node [black] (w1) at (\rx,\ra) {};
\node [black] (w2) at (\rx-\ra*\co,-\ra*\si) {};
\node [black] (w3) at (\rx+\ra*\co,-\ra*\si) {};
\draw (\rx,0) circle (\ra);
\draw ($(w1)+({\rb*\sia},{-\rb*\coa})$)--(w1)--($(w1)+({-\rb*\sia},{-\rb*\coa})$);
\draw ($(w2)+({\rb*cos(40)},{\rb*sin(40)})$)--(w2);
\draw ($(w3)+({-\rb*cos(40)},{\rb*sin(40)})$)--(w3)--($(w3)+({-\rb*\cob},{\rb*\sib})$);
\draw (w1)--(w2)--(w3)--(w1);
\node [black] (w6) at ($0.5*(w1)+0.5*(w2)$) {};
\node [black] (w4) at ($0.5*(w2)+0.5*(w3)$) {};
\node [black] (w5) at ($0.5*(w3)+0.5*(w1)$) {};
\draw (w4)--($(w4)+({\rc*\cob},{\rc*\sib})$); \draw ($(w4)+(0,\rc)$)--(w4);
\draw ($(w6)+({-\rc*\sia},{-\rc*\coa})$)--(w6)--($(w6)+({\rc*cos(40)},{\rc*sin(40)})$); \draw ($(w6)+(\rc*\co,-\rc*\si)$)--(w6);
\draw ($(w5)+({\rc*\sia},{-\rc*\coa})$)--(w5);
\draw (w2) to [out=15,in=-180] (w5);
\draw (w4) to [out=135,in=-150] (w5);
\node [black] (w1) at (2*\rx,\ra) {};
\node [black] (w2) at (2*\rx-\ra*\co,-\ra*\si) {};
\node [black] (w3) at (2*\rx+\ra*\co,-\ra*\si) {};
\draw (2*\rx,0) circle (\ra);
\draw ($(w1)+({\rb*\sia},{-\rb*\coa})$)--(w1)--($(w1)+({-\rb*\sia},{-\rb*\coa})$);
\draw ($(w2)+({\rb*cos(40)},{\rb*sin(40)})$)--(w2);
\draw ($(w3)+({-\rb*cos(40)},{\rb*sin(40)})$)--(w3)--($(w3)+({-\rb*\cob},{\rb*\sib})$);
\draw (w1)--(w2)--(w3)--(w1);
\node [black] (w6) at ($0.5*(w1)+0.5*(w2)$) {};
\node [black] (w4) at ($0.5*(w2)+0.5*(w3)$) {};
\node [black] (w5) at ($0.5*(w3)+0.5*(w1)$) {};
\draw (w4)--($(w4)+({\rc*\cob},{\rc*\sib})$); \draw ($(w4)+(0,\rc)$)--(w4);
\draw ($(w6)+({-\rc*\sia},{-\rc*\coa})$)--(w6)--($(w6)+({\rc*cos(40)},{\rc*sin(40)})$); \draw ($(w6)+(\rc*\co,-\rc*\si)$)--(w6);
\draw ($(w5)+({-\rc*cos(40)},{\rc*sin(40)})$)--(w5);
\draw (w2) to [out=15,in=-150] (w5);
\draw (w4) to [out=135,in=-120] (w5);
\end{tikzpicture}
\caption{The third vertex of the triangular face cannot be one of the existing vertices.}
\label{fig:63512three}
\end{figure}

% (note that it cannot point towards any of the two triangular faces as this creates a bad triangle $(1,0,0)$)
We next show that all the six new vertices are pairwise distinct. Label the edges as in Figure~\ref{fig:63512notsame} and denote $A_i$ the third vertex of the corresponding triangular face. Clearly $A_1=A_2$ and $A_1=A_6$ are impossible. The cases when $A_1=A_5$ and $A_1=A_3$ can be obtained from one another by rotation. Hence it suffices to consider two cases: $A_1=A_4$ and $A_1=A_3$. If $A_1=A_4$, we have two possible partial drawings depending on the direction of the remaining starting segment at the vertex $A_1=A_4$; these drawings are shown in Figure~\ref{fig:63512notsame}. But in the drawing on the left, we are forced to have an edge shown in the broken line which then produces a non-simple graph. Similarly, in the drawing on the right, we must have an edge shown in the broken line which then produces a bad triangle $(2,1,1)$.

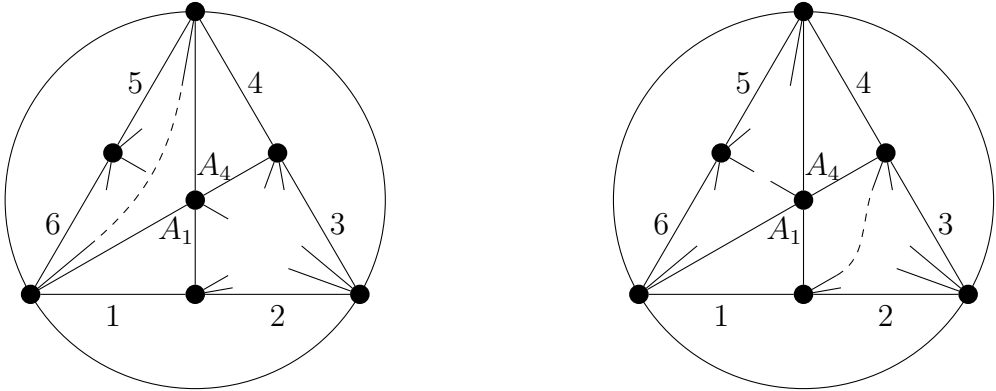
\begin{figure}[h!]
\pgfmathsetmacro{\ra}{2.5}
\pgfmathsetmacro{\rb}{1}
\pgfmathsetmacro{\rc}{0.7}
\pgfmathsetmacro{\rx}{8}
\pgfmathsetmacro{\co}{cos(30)}
\pgfmathsetmacro{\si}{sin(30)}
\pgfmathsetmacro{\coa}{cos(10)}
\pgfmathsetmacro{\sia}{sin(10)}
\pgfmathsetmacro{\cob}{cos(20)}
\pgfmathsetmacro{\sib}{sin(20)}
\tikzstyle{black} = [circle,draw=black,fill=black,ultra thick,inner sep=0pt,minimum size=2mm]
\begin{tikzpicture}
\node [black] (v1) at (0,\ra) {};
\node [black] (v2) at (-\ra*\co,-\ra*\si) {};
\node [black] (v3) at (\ra*\co,-\ra*\si) {};
\draw (0,0) circle (\ra);
\draw (v1)--($(v1)+({-\rb*\sia},{-\rb*\coa})$);
\draw ($(v2)+({\rb*cos(40)},{\rb*sin(40)})$)--(v2);
\draw ($(v3)+({-\rb*cos(40)},{\rb*sin(40)})$)--(v3)--($(v3)+({-\rb*\cob},{\rb*\sib})$);
\node [black] (v6) at ($0.5*(v1)+0.5*(v2)$) {};
\node [black] (v4) at ($0.5*(v2)+0.5*(v3)$) {};
\node [black] (v5) at ($0.5*(v3)+0.5*(v1)$) {};
\draw ($(v4)+({0.5*\rb*\coa},{0.5*\rb*\sia})$)--(v4)--($(v4)+({0.5*\rb*\co},{0.5*\rb*\si})$);
\draw ($(v6)+({-0.5*\rb*\sia},{-0.5*\rb*\coa})$)--(v6)--($(v6)+({0.5*\rb*cos(40)},{0.5*\rb*sin(40)})$); \draw ($(v6)+(0.5*\rb*\co,-0.5*\rb*\si)$)--(v6);
\draw ($(v5)+({0.5*\rb*\sia},{-0.5*\rb*\coa})$)--(v5)--($(v5)+({-0.5*\rb*\sib},{-0.5*\rb*\cob})$);
\draw (v2) -- node[below]{$1$} (v4); \draw (v4) -- node[below]{$2$} (v3); \draw (v3) -- node[right]{$3$} (v5); \draw (v5) -- node[right]{$4$} (v1);
\draw (v1) -- node[left]{$5$} (v6); \draw (v6) -- node[left]{$6$} (v2);
\node [black] (v7) at (0,0) [label={[xshift=-0.25cm, yshift=-0.85cm]:$A_1$}, label={[xshift=0.25cm, yshift=0cm]:$A_4$}] {};
\draw (v2)--(v7)--(v4); \draw (v1)--(v7)--(v5);
\draw (v7) to ($(v7)+(0.5*\co,-0.5*\si)$);
\draw [dashed] ($(v2)+({\rb*cos(40)},{\rb*sin(40)})$) to [out=40,in=-100] ($(v1)+({-\rb*\sia},{-\rb*\coa})$);
\node [black] (w1) at (\rx,\ra) {};
\node [black] (w2) at (\rx-\ra*\co,-\ra*\si) {};
\node [black] (w3) at (\rx+\ra*\co,-\ra*\si) {};
\draw (\rx,0) circle (\ra);
\draw (w1)--($(w1)+({-\rb*\sia},{-\rb*\coa})$);
\draw ($(w2)+({\rb*cos(40)},{\rb*sin(40)})$)--(w2);
\draw ($(w3)+({-\rb*cos(40)},{\rb*sin(40)})$)--(w3)--($(w3)+({-\rb*\cob},{\rb*\sib})$);
\node [black] (w6) at ($0.5*(w1)+0.5*(w2)$) {};
\node [black] (w4) at ($0.5*(w2)+0.5*(w3)$) {};
\node [black] (w5) at ($0.5*(w3)+0.5*(w1)$) {};
\draw ($(w4)+({0.5*\rb*\coa},{0.5*\rb*\sia})$)--(w4)--($(w4)+({0.5*\rb*\co},{0.5*\rb*\si})$);
\draw ($(w6)+({-0.5*\rb*\sia},{-0.5*\rb*\coa})$)--(w6)--($(w6)+({0.5*\rb*cos(40)},{0.5*\rb*sin(40)})$); \draw ($(w6)+(0.5*\rb*\co,-0.5*\rb*\si)$)--(w6);
\draw ($(w5)+({0.5*\rb*\sia},{-0.5*\rb*\coa})$)--(w5)--($(w5)+({-0.5*\rb*\sib},{-0.5*\rb*\cob})$);
\draw (w2) -- node[below]{$1$} (w4); \draw (w4) -- node[below]{$2$} (w3); \draw (w3) -- node[right]{$3$} (w5); \draw (w5) -- node[right]{$4$} (w1);
\draw (w1) -- node[left]{$5$} (w6); \draw (w6) -- node[left]{$6$} (w2);
\node [black] (w7) at (\rx,0) [label={[xshift=-0.25cm, yshift=-0.85cm]:$A_1$}, label={[xshift=0.25cm, yshift=0cm]:$A_4$}] {};
\draw (w2)--(w7)--(w4); \draw (w1)--(w7)--(w5);
\draw (w7) to ($(w7)+(-0.5*\co,0.5*\si)$);
\draw [dashed] ($(w4)+({0.5*\rb*\co},{0.5*\rb*\si})$) to [out=30,in=-110] ($(w5)+({-0.5*\rb*\sib},{-0.5*\rb*\cob})$);
\end{tikzpicture}
\caption{Two partial drawings in the case $A_1=A_4$.}
\label{fig:63512notsame}
\end{figure}

In the case $A_1=A_3$, we also have two possible partial drawings depending on the direction of the remaining starting segment at the vertex $A_1=A_3$. Both of them lead to a bad triangle, either $(2,1,1)$ or $(2,1,0)$.

It follows that all the six vertices $A_i, \, i=1, \dots, 6$, are pairwise distinct. Moreover, every pair of vertices $(A_1,A_6), (A_2,A_3)$ and $(A_4,A_5)$ has to be joined by an edge and then the remaining starting segments at these vertices point outside the resulting triangular domains (to avoid bad triangles $(x,0,0), \, x > 0$). We get the drawing shown in Figure~\ref{fig:63512nine}. Denote $U$ the inner nonagonal domain and $c$ its bounding cycle. Note that there are only three remaining vertices to be added to that partial drawing. Denote them $A, B$ and $C$; all three have degree $5$ and lie in $U$. It may happen that two of the starting segments in Figure~\ref{fig:63512nine} lie on the same edge; we call such an edge a \emph{diagonal}.
\begin{figure}[h!]
\pgfmathsetmacro{\ra}{2.5}
\pgfmathsetmacro{\raa}{2}
\pgfmathsetmacro{\rb}{1}
\pgfmathsetmacro{\rc}{0.7}
\pgfmathsetmacro{\rx}{8}
\pgfmathsetmacro{\co}{cos(30)}
\pgfmathsetmacro{\si}{sin(30)}
\pgfmathsetmacro{\coa}{cos(10)}
\pgfmathsetmacro{\sia}{sin(10)}
\pgfmathsetmacro{\cob}{cos(20)}
\pgfmathsetmacro{\sib}{sin(20)}
\tikzstyle{black} = [circle,draw=black,fill=black,ultra thick,inner sep=0pt,minimum size=2mm]
\begin{tikzpicture}
\node [black] (v1) at (0,\ra) {};
\node [black] (v2) at (-\ra*\co,-\ra*\si) {};
\node [black] (v3) at (\ra*\co,-\ra*\si) {};
\draw (0,0) node {$U$} circle (\ra);
\node [black] (v6) at (-\raa*\co,\raa*\si) {};
\node [black] (v4) at (0,-\raa) {};
\node [black] (v5) at (\raa*\co,\raa*\si) {};
\draw (v1)--(v6)--(v2)--(v4)--(v3)--(v5)--(v1);
\node [black] (v11) at ($(v1)+({\rb*\sia},{-\rb*\coa})$) {};
\node [black] (v12) at ($(v1)+({-\rb*\sia},{-\rb*\coa})$) {};
\draw (v11)--(v1)--(v12);
\node [black] (v21) at ($(v2)+({\rb*cos(40)},{\rb*sin(40)})$) {};
\node [black] (v22) at ($(v2)+({\rb*\cob},{\rb*\sib})$) {};
\draw (v21)--(v2)--(v22);
\node [black] (v32) at ($(v3)+({-\rb*cos(40)},{\rb*sin(40)})$) {};
\node [black] (v31) at ($(v3)+({-\rb*\cob},{\rb*\sib})$) {};
\draw (v31)--(v3)--(v32);
\draw ($(v4)+(0,\rc)$)--(v4); %\draw ($(v4)+({-\rc*\cob},{\rc*\sib})$)--(v4)--($(v4)+({\rc*\cob},{\rc*\sib})$);
\draw ($(v6)+(\rc*\co,-\rc*\si)$)--(v6); %\draw ($(v6)+({-\rc*\sia},{-\rc*\coa})$)--(v6)--($(v6)+({\rc*cos(40)},{\rc*sin(40)})$);
\draw ($(v5)+(-\rc*\co,-\rc*\si)$)--(v5); %\draw ($(v5)+({\rc*\sia},{-\rc*\coa})$)--(v5)--($(v5)+({-\rc*cos(40)},{\rc*sin(40)})$);
\draw (v11)--(v12)--(v6)--(v21)--(v22)--(v4)--(v31)--(v32)--(v5)--(v11);
\draw ($(v11)+(0,-\rc)$)--(v11)--($(v11)+(\rc*\sib,-\rc*\cob)$); \draw ($(v12)+(0,-\rc)$)--(v12)--($(v12)+(-\rc*\sib,-\rc*\cob)$);
\draw ($(v21)+(\rc*\co,\rc*\si)$)--(v21)--($(v21)+({\rc*cos(50)},{\rc*sin(50)})$); \draw ($(v22)+(\rc*\co,\rc*\si)$)--(v22)--($(v22)+(\rc*\coa,\rc*\sia)$);
\draw ($(v32)+(-\rc*\co,\rc*\si)$)--(v32)--($(v32)+({-\rc*cos(50)},{\rc*sin(50)})$);
\draw ($(v31)+(-\rc*\co,\rc*\si)$)--(v31)--($(v31)+(-\rc*\coa,\rc*\sia)$);
\end{tikzpicture}
\caption{The nonagonal domain $U$ with starting segments.}
\label{fig:63512nine}
\end{figure}
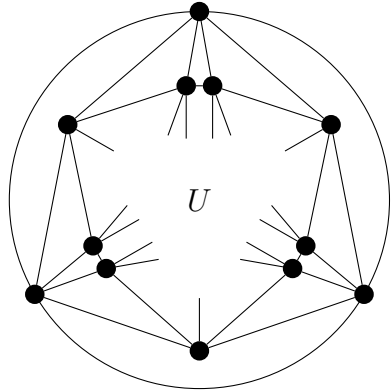
The degree count shows that the number of diagonals $\delta$ equals the number of edges joining the vertices $A,B,C$, so $\delta \in \{0,1,2,3\}$. But if $\delta=0$ then each of the vertices $A,B,C$ is connected to five different vertices on the cycle $c$, and no two such sets of five vertices share more than two elements (otherwise there will be edge crossings). But there are only nine vertices on $c$ and as each of them has no more than two starting segments, we get a contradiction. Similarly, if $\delta=1$, then one of the three vertices $A,B,C$ is connected to five different vertices on $c$, and the other two, to four vertices on $c$, which is again a contradiction. It follows that we have $\delta \in \{2,3\}$ diagonals. They split $U$ into $\delta+1$ domains, with at least two having the property that the intersection of their boundary with $c$ is connected. On the other hand, the vertices $A,B,C$ have $\delta \ge 2$ edges joining them, and so all three lie in a single domain in the complement to the diagonals. It follows that there exists a domain which contains no vertices of the drawing and whose boundary is the union of an arc $c'$ of $c$ and a diagonal. But then the interior of $c'$ contains at least one vertex of $c$ as the graph must be simple, and so there is ``no other endpoint" for a segment starting at that vertex.

This completes the proof of Theorem~\ref{t:bb}.

\vskip.5cm

\emph{Acknowledgements.} This paper is based on the results of the AMSI Vacation Research Scholarship project undertaken by the first author under the supervision of Grant Cairns and the second author. We thank AMSI Vacation Research Scholarship Program for support. We express our deep gratitude to Grant Cairns for his generous contribution to this paper.

%define "extend" and then bad. extend any drawing extended (explain carefully) ; edge which extends the segment
% edge extends or contains starting segment?

\bibliographystyle{amsplain}
\bibliography{bbbib}

\end{document}